\documentclass[a4paper,11pt]{amsart}
\usepackage[T2A]{fontenc}
\usepackage[cp1251]{inputenc}
\usepackage[russian,english]{babel}
\usepackage{amssymb, amsmath, latexsym}
\usepackage{amsfonts,mathtext,cite,enumerate,float,amsthm}
\sloppy
\frenchspacing
\renewcommand{\le}{\leqslant}
\renewcommand{\ge}{\geqslant}

\newtheorem{theorem}{\indent Theorem}

\newtheorem{lemma}{\indent Lemma}
\theoremstyle{definition}
\newtheorem{example}{\indent Example}
\newtheorem{remark}{\indent Remark}

\renewcommand{\Re}{ {\sf Re}\,}
\renewcommand{\Im}{ {\sf Im}\,}

\usepackage{geometry} 
\geometry{left=3cm}
\geometry{right=3cm}
\geometry{top=3cm}
\geometry{bottom=3cm}

\begin{document}
\title[Generalized exponential sums with equal weights]{Interpolation by generalized exponential sums\\ with equal weights}
\author{Petr Chunaev}
\address{National Center for Cognitive Technologies, ITMO University (Saint Petersburg, Russia)}
\date{\today}
\begin{abstract}
In this paper we solve Pad\'e- (i.e. multiple) and Prony (i.e. simple exponential) interpolation problems for the {\it generalized exponential sums with equal weights}:
$$
H_n(z; h):=\frac{\mu}{n}\sum\nolimits_{k=1}^n h(\lambda_k z),\quad \text{where}\quad \mu,\lambda_k\in \mathbb{C},
$$
and $h$ is a fixed analytic function under few natural assumptions. The interpolation of a function $f$ by $H_n$ is due to properly chosen $\mu$ and $\{\lambda_k\}_{k=1}^n$, which depend on $f$, $h$ and $n$.

The sums $H_n$ are related to the  {\it $h$-sums} and {\it amplitude and frequency sums} (also known as {\it generalized exponential sums}), i.e. correspondingly to
$$
\mathcal{H}^*_n(z; h):=\sum\nolimits_{k=1}^n \lambda_k h(\lambda_k z)\quad \text{and}\quad
\mathcal{H}_n(z; h):=\sum\nolimits_{k=1}^n \mu_k h(\lambda_k z),\quad \text{where}\quad \mu_k,\lambda_k\in \mathbb{C},
$$
which generalize many classical approximants and whose properties are actively studied.

As for the Pad\'e problem, we show that $H_n$ and $\mathcal{H}_n^*$ have similar constructions and rates of interpolation, whereas calculating $H_n$ requires less arithmetic operations. Although the Pad\'e problem for $\mathcal{H}_n$ is known to have a doubled interpolation rate with respect to $\mathcal{H}_n^*$ and thus to $H_n$, it can be however unsolvable in quite simple and useful cases and this may entirely eliminate the advantage of $\mathcal{H}_n$.  We show that, in contrast to $\mathcal{H}_n$, the Pad\'e problem for $H_n$ always has a unique solution. What is even more important, we also obtain several efficient estimates for $\mu$ and $\lambda_k$, valuable by themselves, and use them in further evaluating interpolation quality and in numerical applications.

The above-mentioned Pad\'e problem and estimates provide a basis for managing the more interesting Prony problem for exponential sums with equal weights $H_n(z;\exp)$, i.e. when $h(z)=\exp(z)$. We show that it is uniquely solvable and surprisingly $\mu$ and $\lambda_k$ can be efficiently estimated. This is in sharp contrast to the case of well-known exponential sums~$\mathcal{H}_n(z;\exp)$.
\end{abstract}

\maketitle
\footnotetext[1]{The research presented in Sections 2, 3 and 5 was funded by Russian Foundation for Basic Research according to the research project 18-01-00744 A. The research presented in Section 4 was financially supported by Russian Science Foundation, Agreement 17-71-30029, with co-financing of Bank Saint Petersburg.}

\section{Introduction}

\subsection{Statement of the problem} In this paper we consider  Pad\'e (i.e. multiple) and Prony (i.e. simple exponential) interpolation by sums of the form
\begin{equation}
\label{equal_AFS}
H_n(z; h):=\frac{\mu}{n}\sum_{k=1}^n h(\lambda_k z),\quad \text{where}\quad \mu,\lambda_k\in \mathbb{C}
\end{equation} 
and $h$ is a fixed analytic function. The interpolation of a function by $H_n$ is carried out by a proper choice of the parameters $\mu$ and $\lambda_k$, $k=1,\ldots,n$, which depend on $n$, $h$ and the function to be interpolated.

The sums (\ref{equal_AFS}) may be considered as representatives of the class of  {\it amplitude and frequency sums} (also known as {\it generalized exponential sums}), i.e. sums with $2n$ free parameters ({\it amplitudes} (or {\it weights}) $\mu_k$ and {\it frequencies} (or {\it exponents}) $\lambda_k$) of the form
\begin{equation}
\label{AFS}
\mathcal{H}_n(z;h):=\sum_{k=1}^n \mu_kh(\lambda_k z),\quad \text{where}\quad \mu_k,\lambda_k\in \mathbb{C}.
\end{equation}
The approximative properties of general sums (\ref{AFS}) and their particular cases (including exponential sums, classical Pad\'e approximants, Gauss type quadratures) are actively studied in approximation theory (see a brief survey e.g. in \cite{DanChu2016}). The sums (\ref{AFS}) with the restriction $\mu_k=\lambda_k^\eta$ for some $\eta\in \mathbb{N}_0$ (i.e. already with $n$ free parameters)
\begin{equation}
\label{h-sums-nu}
\mathcal{H}^*_{\eta,n}(z;h):=\sum_{k=1}^n\lambda_k^\eta h(\lambda_k z),\qquad \eta\in \mathbb{N}_0,\qquad \lambda_k\in\mathbb{C},
\end{equation}
are usually called {\it $h$-sums}; they were introduced in \cite{Dan2008}. The most explored case is ${\eta=1}$,
\begin{equation}
\label{h-sums}
\mathcal{H}^*_n(z;h):=\mathcal{H}^*_{1,n}(z;h)=\sum_{k=1}^n\lambda_k h(\lambda_k z),\qquad \lambda_k\in\mathbb{C},
\end{equation}
 see \cite{Dan2008,Chu2010,DanChu2011,Fryantsev,Borodin,Nigmatyanova,Chu_extrapolation}. The paper \cite{Dan2008} contains several remarks\footnote{Note that the sums (\ref{h-sums}) for $\eta=0$, although look similar, have more restricted approximative properties  than our sums (\ref{equal_AFS}). Indeed, the $0$th Taylor coefficient of the function 
 	$\mathcal{H}^*_{0,n}(z;h)$ is always $n$. This does not allow to approximate functions $f$ with $f_0\neq n$. This circumstance can be however overcome by considering $\frac{n}{f_0}f$ with $f_0\neq 0$ instead of $f$, i.e. by applying our sums (\ref{equal_AFS}) in fact.
 	The sums (\ref{equal_AFS}) and (\ref{h-sums}) are also connected as follows. Put $h(z)=z^\eta g(z)$, $\eta\in \mathbb{N}_0$, in (\ref{equal_AFS}) to get $H_n(z;h)=\frac{\mu}{n}\sum_{k=1}^n (\lambda_k z)^\eta g(\lambda_k z) =z^\eta  \frac{\mu}{n}\mathcal{H}^*_{\eta,n}(z;g)$.} on the general case of~(\ref{h-sums-nu}).

Observe that computing (\ref{equal_AFS}) requires less arithmetic operations than that of (\ref{AFS}) and (\ref{h-sums}), although all these sums have similar approximative properties with respect to the number of free parameters, as will be shown below.

Let us come back to the formulation of the problems that we consider in this paper. In the case of the Pad\'e interpolation we set
\begin{equation}
\label{fh}
f(z)=\sum_{m=0}^\infty f_m z^m\quad \text{and} \quad h(z)=\sum_{m=0}^\infty h_m z^m.
\end{equation}
The function $h$ is a fixed analytic function, $f$ is an analytic function to be interpolated. Additionally, we suppose\footnote{If the first non-vanishing Taylor coefficient of $f$ is $f_l$, then write $f(z)=z^lF(z)$ so that $F_0\neq 0$ and apply the scheme from this paper to $F$ to get an interpolant of the form $z^l\frac{\mu}{n}\sum_{k=1}^n h(\lambda_k z)$ for $f(z)$. In some cases it is reasonable to add a non-zero parameter playing the role of $f_0\neq 0$ (see Subsection~\ref{diff_section}).} that
\begin{equation}
\label{fh-condition}
f_0\neq 0\qquad \text{and}\qquad h_m=0 \Rightarrow f_m=0\quad \text{for all }m=0,1,\ldots.
\end{equation}

For convenience, we introduce the following (well defined due to (\ref{fh-condition})) numbers:
\begin{equation}
\label{r_m}
r_{m}=r_m(f,h):=\left\{
\begin{array}{ll}
0, & f_{m}=0,\\
f_m/h_m, & f_{m}\ne 0,\\
\end{array}
\right.
\qquad m=0,1,\ldots.
\end{equation}

We are interested in solving the following {\bf Pad\'e (multiple) interpolation problem in a neighbourhood of $z=0$}: {\it find complex $\mu$ and $\{\lambda_k\}_{k=1}^n$, depending on $f,h$ and $n$, such that}
\begin{equation}
\label{Pade-problem}
f(z)-H_n(z;h)=O(z^{n+1})\quad\text{for}\quad z\to 0.
\end{equation}

As for the Prony interpolation, we fix $h(z)=\exp(z)$ in (\ref{equal_AFS}) and interpolate by 
\begin{equation}
\label{exp-sums}
H_n(z;\exp)=H_n^{\sf exp}(z):=\frac{\mu}{n}\sum_{k=1}^n \exp(\lambda_k z)
\end{equation}
the table
\begin{equation}
\label{table}
\left\{m, g(m)\right\}_{m=0}^n,\qquad g(0)\neq 0,
\end{equation}
generated by a complex-valued function $g$. Thus we deal with the {\bf Prony (simple exponential) interpolation problem}: {\it find complex $\mu$ and $\lambda_k$, depending on $g$ and $n$, such that}
\begin{equation}
\label{Prony-problem}
g(z)=H_n^{\sf exp}(z)\quad \text{for}\quad z\in\{m\}_{m=0}^{n}.
\end{equation}

\medskip

The paper is organised as follows. Section~\ref{section2} (with an appendix in Section~\ref{section_proof_of_theorem1}) contains several estimates for the so-called power sums and their components. The estimates have their own value and  are used later for estimating $|\mu|$, $|\lambda_k|$, the remainder and the rate of interpolation in the problems under consideration. Section~\ref{section3.1} is devoted to solving the Pad\'e problem (\ref{Pade-problem}), with corresponding estimates. In Sections~\ref{section3.2} and \ref{section3.3}, we compare approximative properties of $H_n$ with those of $\mathcal{H}_n$ and $\mathcal{H}^*_n$ and give several applications of $H_n$ to numerical analysis. In Section~\ref{section4.1} we solve the  Prony  problem (\ref{Prony-problem}) and estimate the interpolation parameters. In Sections~\ref{section4.2} and~\ref{section4.3}, we compare $H_n^{\sf exp}$, solving (\ref{Prony-problem}), and the original Prony exponential sums.

\section{Estimates for power sums and their components}
\label{section2}

We first aim to prove several estimates for the power sums of complex numbers. They are of an independent interest since are related to the power sums problems appearing in different fields of analysis (e.g. in Tur\'an's power sum method).
Let 
\begin{equation}
\label{Lambda}
\Lambda_n:=\{\lambda_k\}_{k=1}^n,\qquad \text{where} \quad \lambda_k\in\overline{\mathbb{C}}.
\end{equation}
Consider the {\it power sums} for the set $\Lambda_n$:
\begin{equation}
S_{m}:=S_{m}(\Lambda_n)=\sum^n_{k=1}\lambda^{m}_k, \qquad m=1,2,\ldots. 
\label{S_m} 
\end{equation}
\begin{theorem}
\label{theorem1}
Let $n\ge 2$.
If $|S_m(\Lambda_n)|\le a^m$ for some $a\ge 0$ and $m=1,\ldots,n$, then
\begin{equation}
\label{main_lambda_est}
\max_{k=1,\ldots,n}|\lambda_k|\le (1+\varepsilon_n)a,\qquad \text{where}\quad \varepsilon_n:=\frac{2(\ln n -\ln\ln n)}{n}< \frac{2\ln n}{n}.
\end{equation}
Furthermore, $(\ref{main_lambda_est})$ cannot be improved much as for $n\ge n_0$ there exists $\tilde{\Lambda}_n$ such that
\begin{equation}
\label{varepsilon_improve}
|S_m(\tilde{\Lambda}_n)|\le a^m,\quad m=1,\ldots,n,\quad\text{and}\quad |\tilde{\lambda}_1|= \left(1+\frac{c_n}{n}\right)a,\quad c_n\in[1/10,1].
\end{equation}
\end{theorem}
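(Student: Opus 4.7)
The plan is to reduce to $a=1$ by homogeneity (replace $\lambda_k$ by $\lambda_k/a$) and then show that the hypothesis $|S_m(\Lambda_n)|\le 1$ for $m=1,\dots,n$ forces $M:=\max_k|\lambda_k|\le 1+\varepsilon_n$. After relabelling so $|\lambda_1|=M$, I introduce the polynomial
\[
P(z)=\prod_{k=1}^n(1-\lambda_k z)=1+\sum_{k=1}^n(-1)^k e_k z^k,
\]
whose zeros are the reciprocals $1/\lambda_k$; the zero $z_*=1/\lambda_1$ has smallest modulus $1/M$. The Newton--Girard identities $k e_k=\sum_{j=1}^k(-1)^{j-1}e_{k-j}S_j$, applied inductively from $e_0=1$ together with $|S_j|\le 1$, immediately yield the a priori coefficient bound $|e_k|\le 1$ for every $k=0,\dots,n$.

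Next, $P(z_*)=0$ rearranges (after a phase rotation making $\lambda_1=M>0$) to $1=\sum_{k=1}^n(-1)^{k+1}e_k M^{-k}$, and hence produces the fundamental inequality $1\le\sum_{k=1}^n |e_k| M^{-k}$. The crude estimate $|e_k|\le 1$ produces only the weak conclusion $M<2$, and the main obstacle is to sharpen this to $M\le 1+2(\ln n-\ln\ln n)/n$. The strategy I would pursue is to refine the sum by splitting it at a cut-off $k_0$ and using, for $k\le k_0$, a tighter bound on $|e_k|$ obtained either by unrolling Newton's recursion to write $e_k$ as an explicit polynomial of total weight $k$ in $S_1,\dots,S_k$ or by using Cauchy estimates on the Taylor coefficients of the exponential generating representation $P(z)=\exp\bigl(-\sum_{m\ge 1}S_m z^m/m\bigr)$ truncated to degree $n$. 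Careful book-keeping of tail terms (where one must use the crude $|S_m|\le nM^m$ for $m>n$) together with an optimisation over $k_0\sim n/\ln n$ should produce the claimed $\varepsilon_n$; this optimisation is the principal technical step.

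For the sharpness assertion (\ref{varepsilon_improve}), I would construct $\tilde\Lambda_n$ explicitly by placing $\tilde\lambda_1=(1+c_n/n)a$ on the positive real axis and taking $\tilde\lambda_2,\dots,\tilde\lambda_n$ as a perturbation of the $(n-1)$st roots of unity scaled by $a$, tuned so that $\sum_{k\ge 2}\tilde\lambda_k^m$ cancels $\tilde\lambda_1^m$ within the admissible margin $a^m$ for each $m=1,\dots,n$. The upper bound $c_n\le 1$ is immediate from the first part of the theorem, whereas showing that $c_n$ can be taken at least $1/10$ requires verifying all $n$ coupled power-sum inequalities simultaneously for a concrete choice of perturbation, which is the main challenge of this half of the proof.
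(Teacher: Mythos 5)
Your proposal identifies the right starting points (reduction to $a=1$, the Newton--Girard bound $|e_k|\le 1$, the observation that this alone only yields $M<2$), but both halves stop exactly where the real work begins, and the mechanism you sketch for the first half cannot work. The bound $|e_k|\le 1$ is attained for \emph{every} $k$ simultaneously under the hypothesis $|S_j|\le 1$: take $S_j=(-1)^{j-1}$, for which Newton's recursion gives $e_k=1$ for all $k$. Hence there is no ``tighter bound on $|e_k|$ for $k\le k_0$'' to be extracted from termwise coefficient estimates, and the passage from $M<2$ to $M\le 1+O(\ln n/n)$ must exploit joint information that your fundamental inequality $1\le\sum_k|e_k|M^{-k}$ has already discarded. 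The paper does not attempt this route: it quotes a lemma from \cite{Chu2010} asserting $\max_k|\lambda_k|\le(1+\varepsilon_n)a$ with $\varepsilon_n\in(0,1)$ the root of $\varepsilon_n^2-(1-\varepsilon_n)^{n+1}=0$, and the entire content of the first half of Theorem~\ref{theorem1} is then Lemma~\ref{lemma3.3}: the function $E(x)=x^2-(1-x)^{n+1}$ is increasing on $[0,1]$ and $E\left(2\ln(n/\ln n)/n\right)>0$, which pins $\varepsilon_n$ below $2(\ln n-\ln\ln n)/n$. Your ``optimisation over $k_0\sim n/\ln n$'' is a plan, not a proof, and as stated it has no viable path to one.

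For the sharpness assertion your plan is inverted relative to the paper's, and the inversion is precisely what makes yours hard. You propose to prescribe the roots (one at $(1+c_n/n)a$ plus perturbed roots of unity) and then verify the $n$ coupled inequalities $|S_m|\le a^m$; you concede this verification is the main challenge and do not carry it out. The paper instead prescribes the power sums ($S_m=1$ for $m=1,\dots,n-1$ and $S_n=-1$, with $n$ odd), so the constraint $|S_m|\le 1$ holds trivially, and all the work moves to locating the roots of the resulting explicit polynomial $P_n(\lambda)=\lambda^{n-1}(\lambda-1)+2/n$: Rouch\'e's theorem against $p_n(\lambda)=\lambda^{n-1}(\lambda-1)$ on $|\lambda|=1+1/n$ gives the upper bound, and an argument-increment analysis on the arc $|\lambda|=1+1/(10n)$, $|\arg\lambda|\le 17/(10n)$, shows that one loop of the image curve is translated off the origin by the constant $2/n$, forcing a root outside $|\lambda|=1+1/(10n)$. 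Without either this change of viewpoint or a concretely verified perturbation, your second half is also a genuine gap.
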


This result is a revised and generalized version of the estimates partly obtained in the papers \cite{Chu2010,DanChu2011} and the unpublished manuscript \cite{Chu2013} by the author. The preceding and more qualitative estimate $\max_{k=1,\ldots,n}|\lambda_k|\le 2a$ under the same assumptions is proved in \cite{Dan2008}. The proof of Theorem~\ref{theorem1} is postponed to Section~\ref{section_proof_of_theorem1} due to its length.

Below we will use Theorem~\ref{theorem1} to obtain estimates for different parameters in the interpolation processes under consideration. 

For further discussion we recall how to find the set $\Lambda_n$ (see (\ref{Lambda})) from the following  system for their power sums $S_m=S_m(\Lambda_n)$:
\begin{equation}
\label{Newton_MP}
S_m = s_m,\quad m=1,\ldots,n,\qquad \text{where }s_m\in \mathbb{C} \text{ are given}. 
\end{equation} 
We call (\ref{Newton_MP}) a {\it Newton moment problem}. To proceed, let us introduce the {\it elementary symmetric polynomials} for the elements of $\Lambda_n$:
\begin{equation}
\label{sym_polynomials}
\sigma_m=\sigma_m(\Lambda_n):=\sum_{1\le j_1 < \ldots< j_m \le
	n}\lambda_{j_1}\cdots \lambda_{j_m},\qquad m=1,\ldots,n.
\end{equation}
The connection between the power sums (\ref{S_m}) and polynomials (\ref{sym_polynomials}) is expressed by the well-known \textit{Newton-Girard formulas} \cite[Section 3.1]{Prasolov}:
\begin{equation}
\label{sigma}
\sigma_1=S_1,\quad
\sigma_m=\frac{(-1)^{m+1}}{m}
\left(S_{m}+\sum_{j=1}^{m-1}
(-1)^{j}\,S_{m-j}\sigma_j\right),\quad m=2,\ldots,n.
\end{equation}
Moreover, the set $\Lambda_n$ is formed by the $n$ roots of the {\it unitary} polynomial
\begin{equation}
\label{P_n}
P_n(\lambda):=\lambda^n-\sigma_1 \lambda^{n-1}+\sigma_2
\lambda^{n-2}+\ldots+(-1)^n\sigma_n.
\end{equation}

Consequently, given any $s_m$, one can solve the system (\ref{Newton_MP}) using (\ref{sigma}) and (\ref{P_n}) and --- what is very important for us --- this {\it solution $\Lambda_n$ always exists and is unique}.

The formulas (\ref{sigma}) and (\ref{P_n}) allow to get estimates for $|\sigma_m|$ and $|\lambda_k|$ (as in Theorem~\ref{theorem1}) under some assumptions on $|s_m|$ (i.e. on $|S_m|$, equivalently). For example, it is proved in \cite{Chu2010}, that the condition $|s_m|\le a^m$ implies that $|\sigma_m|\le a^m$, where $m=1,\ldots,n$. This is applied in \cite{Dan2008,Chu2010,Chu2013} for obtaining several estimates preceding to (\ref{main_lambda_est}). As shown in \cite{DanChu2011}, the condition $s_m= a^{m-1}$ for $a\ge 1$ implies that $\max_{k=1,\ldots,n}|\lambda_k|\le a(1-(1-a^{-1})/n)$. This estimate is essentially used for constructing new extrapolation formulas for analytic functions in \cite{Chu_extrapolation,DanChu2011}. Other related estimates can be also found e.g. in \cite{Komarov2018,Fryantsev,Dan2008}. Note that the majority of previous estimates are established under the condition that power sums are bounded by corresponding members of a {\it geometric} progression. Now we prove a result with another condition that in particular gives the case when the power sums are bounded by members of an {\it arithmetic} progression. 
\begin{theorem}
\label{theorem_new_estimate}
Let $n\ge 2$. 
If $|S_m|\le \gamma m^va^m$ with some $\gamma>0$, $a\ge 0$ and $v\in[0,1]$ for all $m=1,\ldots,n$, then
\begin{equation}
\label{sigma_est_new}
|\sigma_m|\le \gamma m^{v-1}(1+\gamma)^{m-1}a^m,\qquad m=1,\ldots,n.
\end{equation}
Moreover, it holds that
\begin{equation}
\label{sigma_lambda_est_new}
\max_{k=1,\ldots,n}|\lambda_k|\le \left((1+\gamma)n^{\frac{v-1}{n-1}}+\gamma\right) a\le \left(1+2\gamma\right) a.
\end{equation}
\end{theorem}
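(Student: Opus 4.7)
My plan is to prove (\ref{sigma_est_new}) by induction on $m$ using the Newton--Girard formulas (\ref{sigma}), and then to derive (\ref{sigma_lambda_est_new}) from a Cauchy-type root bound for the polynomial $P_n$ of (\ref{P_n}) combined with (\ref{sigma_est_new}).

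The base case $m=1$ of (\ref{sigma_est_new}) is immediate: $|\sigma_1|=|S_1|\le\gamma a=\gamma\cdot 1^{v-1}(1+\gamma)^0 a$. For the inductive step I apply the triangle inequality in (\ref{sigma}), substitute the inductive hypothesis $|\sigma_j|\le\gamma j^{v-1}(1+\gamma)^{j-1}a^j$ and the given bound $|S_{m-j}|\le\gamma(m-j)^v a^{m-j}$, and factor out $\gamma a^m/m$. The claim then reduces to
\[
m^v+\gamma\sum_{j=1}^{m-1}(m-j)^v j^{v-1}(1+\gamma)^{j-1}\le m^v(1+\gamma)^{m-1}.
\]
Since $v\in[0,1]$ we have $(m-j)^v\le m^v$ and $j^{v-1}\le 1$, so the sum is majorised by $m^v\sum_{j=1}^{m-1}(1+\gamma)^{j-1}=m^v((1+\gamma)^{m-1}-1)/\gamma$, from which the desired bound follows after routine cancellation; note that the full hypothesis $v\in[0,1]$ is used here.

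For (\ref{sigma_lambda_est_new}), each $\lambda_k$ is a root of $P_n$, so the triangle inequality applied to $\lambda_k^n=\sum_{m=1}^n(-1)^{m+1}\sigma_m\lambda_k^{n-m}$ yields, on setting $R:=|\lambda_k|$ (assumed positive, else there is nothing to prove) and $x:=(1+\gamma)a/R$,
\[
1\le\sum_{m=1}^n|\sigma_m|R^{-m}\le\frac{\gamma}{1+\gamma}\sum_{m=1}^n m^{v-1}x^m.
\]
The key ingredient will be the elementary inequality $m^{v-1}\le n^{(v-1)(m-1)/(n-1)}$ for $1\le m\le n$ and $v\in[0,1]$; since $v-1\le 0$, this is equivalent to $(\ln m)/(m-1)\ge(\ln n)/(n-1)$ for $m\ge 2$, which I will verify by showing that $t\mapsto(\ln t)/(t-1)$ is decreasing on $(1,\infty)$ (its derivative has numerator $1-1/t-\ln t$, which vanishes at $t=1$ and has non-positive derivative for $t\ge 1$).

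Writing $q:=n^{(v-1)/(n-1)}\le 1$, the lemma majorises the inner sum by a partial geometric series of ratio $qx$, bounded by $x/(1-qx)$ whenever $qx<1$. A direct calculation shows that if $R>((1+\gamma)q+\gamma)a$, then $qx<1$ holds strictly and $\gamma x/((1+\gamma)(1-qx))<1$, contradicting the displayed inequality; hence $R\le((1+\gamma)q+\gamma)a$, which is the first bound in (\ref{sigma_lambda_est_new}). The second bound $(1+2\gamma)a$ is then immediate from $q\le 1$. I expect the main obstacle to be pinpointing the sharp auxiliary inequality $m^{v-1}\le n^{(v-1)(m-1)/(n-1)}$, as a crude bound $m^{v-1}\le 1$ inside the geometric series only recovers the weaker estimate $(1+2\gamma)a$ and misses the refinement in $n$.
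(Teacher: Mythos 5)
Your proposal is correct and follows essentially the same route as the paper: the same induction via the Newton--Girard formulas with the bound $(m-j)^v j^{v-1}\le m^v$, and the same key inequality $m^{v-1}\le n^{(v-1)(m-1)/(n-1)}$ (the paper states it as $m\ge n^{(m-1)/(n-1)}$) to reduce the root estimate for $P_n$ to a geometric series. The only cosmetic difference is that you argue by contradiction root-by-root while the paper shows $|P_n(\lambda)|/|\lambda|^n>0$ outside the disc, which is the same computation.
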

\begin{proof}
First, by the change of variables the problem can be reduced to the case $a=1$. We proceed by induction. For $m=1$ we get from (\ref{sigma}) that $|\sigma_1|\le \gamma$ and thus (\ref{sigma_est_new}) holds in this case. Suppose that (\ref{sigma_est_new}) is also true for each $m=2,\ldots,M-1$. Then by (\ref{sigma}),
\begin{align*}
M&|\sigma_M|\\
&\le |S_{M}|+\sum\nolimits_{j=1}^{M-1}
|S_{M-j}||\sigma_j|\le \gamma M^v+\gamma^2\sum\nolimits_{j=1}^{M-1}
 (M-j)^v j^{v-1}(1+\gamma)^{j-1}\\
& \le \gamma M^{v}
\left(1+\gamma\sum\nolimits_{j=1}^{M-1}
\left(1-\tfrac{j}{M}\right)^v j^{v-1}(1+\gamma)^{j-1}\right)\le
\gamma M^{v}
\left(1+\gamma\sum\nolimits_{j=1}^{M-1}
(1+\gamma)^{j-1}\right)\\
&=\gamma M^{v}
\left(1+\gamma\cdot \frac{1-(1+\gamma)^{M-1}}{1-(1+\gamma)}\right)
=\gamma M^{v}(1+\gamma)^{M-1}.
\end{align*}
Dividing both parts by $M$ yields the required inequality for $|\sigma_M|$.

Now we prove the estimate for $|\lambda_k|$. From (\ref{P_n}) and (\ref{sigma_est_new}) we get for $\lambda\neq 0$ that
$$
\frac{|P_n(\lambda)|}{|\lambda|^n}\ge 1-\sum\nolimits_{m=1}^n\frac{|\sigma_m|}{|\lambda|^m}\ge 1-\frac{\gamma}{|\lambda|}\sum\nolimits_{m=1}^n\left(\frac{1+\gamma}{|\lambda|}\right)^{m-1}m^{v-1}.
$$
Furthermore,
$m=(e^{\frac{\ln m}{m-1}})^{m-1}\ge (e^{\frac{\ln n}{n-1}})^{m-1}=n^{\frac{m-1}{n-1}}$ for $m=2,\ldots,n$. Since $v\in[0,1]$, it holds for $|\lambda|>(1+\gamma)n^{\frac{v-1}{n-1}}+\gamma$ that
$$
\frac{|P_n(\lambda)|}{|\lambda|^n}\ge
1-\frac{\gamma}{|\lambda|}\sum\nolimits_{m=1}^n\left(\frac{1+\gamma}{|\lambda|}n^{\frac{v-1}{n-1}}\right)^{m-1}=
$$
$$ 
=\frac{|\lambda|-(1+\gamma)n^{\frac{v-1}{n-1}}-\gamma\left(1-\left(\frac{1+\gamma}{|\lambda|}n^{\frac{v-1}{n-1}}\right)^n\right)}{|\lambda|-(1+\gamma)n^{\frac{v-1}{n-1}}}
\ge 
\frac{|\lambda|-\left((1+\gamma)n^{\frac{v-1}{n-1}}+\gamma\right)}{|\lambda|-(1+\gamma)n^{\frac{v-1}{n-1}}}>0.
$$
Thus, all $\lambda_k$, the roots of $P_n$, lie in the disc $|\lambda|\le (1+\gamma)n^{\frac{v-1}{n-1}}+\gamma$. For the second inequality in (\ref{sigma_lambda_est_new}), take into account that $v\in[0,1]$ and $n^{\frac{v-1}{n-1}}\le 1$ for $n\ge 2$.
\end{proof}

\section{Pad\'e interpolation by $H_n$}

The results from this section were announced in \cite{Chu_Conf}. 

\subsection{Main theorem about the Pad\'e interpolation by $H_n$}
\label{section3.1}

Recall the definitions (\ref{equal_AFS}) and (\ref{Pade-problem}) and the assumptions (\ref{fh}), (\ref{fh-condition}) and (\ref{r_m}).

\begin{theorem}
\label{theorem2}
Fix $n$ and $h$. Given a function $f$ satisfying $(\ref{fh})$ and $(\ref{fh-condition})$, there exist uniquely determined $\mu\neq 0$ and $\Lambda_n=\{\lambda_k\}_{k=1}^n$ such that the following interpolation formula holds:
\begin{equation}
\label{th1_equality} f(z)=\frac{\mu}{n}\sum_{k=1}^nh(\lambda_k	z)+R_n(z),\qquad R_n(z)=O(z^{n+1}),\qquad z\to 0.
\end{equation}
This formula is exact for polynomials $f$ of degree $\le n$, i.e. $R_n(z)\equiv 0$ for such~$f$.

More precisely, one can find the above-mentioned numbers as follows:
\begin{equation}
\label{mu-Pade}
\mu=r_0;
\end{equation}
$\Lambda_n$ is the solution to the system of the form $(\ref{Newton_MP})$ with
\begin{equation}
\label{syst2}
s_{m}=\frac{n}{r_0}\,r_m, \qquad m=1,\ldots,n.
\end{equation}
\end{theorem}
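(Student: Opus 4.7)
The plan is to reduce the Padé condition (\ref{Pade-problem}) to a Newton moment problem by matching Taylor coefficients at $z=0$ up through order $n$. Substituting (\ref{fh}) into (\ref{equal_AFS}) and formally interchanging summations,
\begin{equation*}
H_n(z;h)=\frac{\mu}{n}\sum_{k=1}^n\sum_{m=0}^\infty h_m\lambda_k^m z^m=\mu h_0+\sum_{m=1}^\infty \frac{\mu h_m}{n}S_m\,z^m,
\end{equation*}
where $S_m=\sum_{k=1}^n\lambda_k^m$ is the $m$th power sum of $\Lambda_n$ (and the constant term comes from $S_0=n$). Therefore (\ref{th1_equality}) with $R_n(z)=O(z^{n+1})$ is equivalent to the $n+1$ scalar equations
\begin{equation*}
\mu h_0=f_0,\qquad \frac{\mu h_m}{n}S_m=f_m,\quad m=1,\dots,n.
\end{equation*}

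Next I would extract the parameters one layer at a time. From $f_0\neq 0$ and (\ref{fh-condition}) we get $h_0\neq 0$, so the first equation uniquely forces $\mu=f_0/h_0=r_0\neq 0$, establishing (\ref{mu-Pade}). For each $m\in\{1,\dots,n\}$ there are two sub-cases: if $h_m\neq 0$, the $m$th equation uniquely fixes $S_m=nf_m/(\mu h_m)=(n/r_0)r_m$; if $h_m=0$, then by (\ref{fh-condition}) also $f_m=0$, the equation holds automatically, and by (\ref{r_m}) the prescription $s_m=(n/r_0)r_m=0$ is consistent with it. In either case (\ref{Pade-problem}) is equivalent to the Newton moment problem (\ref{Newton_MP}) with $s_m$ as in (\ref{syst2}).

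To conclude, I would invoke exactly the machinery already recalled between (\ref{Newton_MP}) and (\ref{P_n}): the Newton--Girard formulas (\ref{sigma}) produce $\sigma_1,\dots,\sigma_n$ uniquely from $s_1,\dots,s_n$, these determine the monic polynomial $P_n$ in (\ref{P_n}), and its $n$ roots (counted with multiplicity) form the unique multiset $\Lambda_n$ solving the moment problem. This delivers both existence and uniqueness of $(\mu,\Lambda_n)$, and (\ref{th1_equality}) then holds by construction. The exactness on polynomials of degree $\le n$ would be read off the same computation: matching the Taylor coefficients of $f$ through order $n$ exhausts all of its nonzero coefficients, and the construction is set up precisely to make the remainder vanish in that regime.

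The only genuine subtlety in the whole argument is handling the degenerate case $h_m=0$, which is precisely why assumption (\ref{fh-condition}) together with the convention (\ref{r_m}) is imposed; once this bookkeeping is in place, the proof is a short deduction sitting on top of the standard Newton moment problem, and no additional analytic work beyond formal Taylor expansion is required.
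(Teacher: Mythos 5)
Your proof is correct and follows essentially the same route as the paper's: expand $H_n$ into a Taylor series, match coefficients up to order $n$ to get $\mu h_0=f_0$ and $\tfrac{\mu h_m}{n}S_m=f_m$, and reduce to the Newton moment problem (\ref{Newton_MP}) with moments (\ref{syst2}), whose unique solvability via (\ref{sigma}) and (\ref{P_n}) gives $\Lambda_n$. Your explicit bookkeeping of the case $h_m=0$ is in fact slightly more careful than the published argument, which handles it implicitly through (\ref{fh-condition}) and (\ref{r_m}).
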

\begin{proof}
By (\ref{equal_AFS}) and (\ref{fh})
$$
H_n(z;h)=\frac{\mu}{n}\sum_{k=1}^n\sum_{m=0}^\infty h_m(\lambda_k
z)^m=\sum_{m=0}^\infty
h_m\left(\frac{\mu}{n}\sum_{k=1}^n\lambda_k^m\right)z^m.
$$
From the condition (\ref{Pade-problem}), i.e.
$$
\sum_{m=0}^\infty
h_m\left(\frac{\mu}{n}\sum_{k=1}^n\lambda_k^m\right)z^m=\sum_{m=0}^\infty f_mz^m+O(z^{n+1}),
$$ 
we arrive at the system
$$
h_m\left(\frac{\mu}{n}\sum_{k=1}^n\lambda_k^m\right)=f_m,\qquad m=0,\ldots,n.
$$
From here, by taking into account (\ref{fh-condition}), we obtain the unique  $\mu\neq 0$ as in (\ref{mu-Pade}) and the system (\ref{syst2}) that is actually a Newton-type moment problem (\ref{Newton_MP}), whose solution $\Lambda_n=\Lambda_n(\{s_m\}_{m=1}^n)$ always exists and is unique.
\end{proof}

For the terms in the next result, recall Theorems~\ref{theorem1} and~\ref{theorem2}.

\begin{theorem}
\label{theorem_Pade_remainder}
Suppose that the assumptions of Theorem~\ref{theorem2} are satisfied. Additionally, let $|h_m|\le 1$ for all $m=n+1,n+2,\ldots$. Then the following holds for $(\ref{th1_equality})$:

\begin{itemize}
	\item[$(a)$] If $|r_m|\le \frac{|r_0|}{n}\,a^m$ for all $m=1,2,\ldots$ and some $a\ge 0$, then 
	\begin{itemize}
		\item[$(i)$] $\max_{k=1,\ldots,n}|\lambda_k|\le (1+\varepsilon_n)a$,
		\item[$(ii)$] in the disk $|z|<(1+\varepsilon_n)^{-1}a^{-1}$ the sum $H_n(z)$ is analytic  and moreover
		\begin{equation}
		\label{R_n_estimated}
		|R_n(z)|\le \frac{2|r_0| n^2|az|^{n+1}}{1-(1+\varepsilon_n)a|z|},\qquad n\ge 2,
		\end{equation}
		\item[$(iii)$] $H_n(z)\to f(z)$ uniformly for $|z|< a^{-1}$.
	\end{itemize}
	\item[$(b)$] If $|r_m|\le \frac{|r_0|}{n}\,\gamma m a^m$ for all $m=1,2,\ldots$ and some $\gamma>0$ and $a\ge 0$, then 
	\begin{itemize}
		\item[$(i)$] $\max_{k=1,\ldots,n}|\lambda_k|\le (1+2\gamma)a$,
		\item[$(ii)$] in the disk $|z|<(1+2\gamma)^{-1}a^{-1}$ the sum $H_n(z)$ is analytic and moreover
		\begin{equation}
		\label{R_n_estimated_2}
		|R_n(z)|\le \frac{2|r_0||(1+2\gamma)az|^{n+1}}{(1-(1+2\gamma)a|z|)^2},\qquad n\ge 2,
		\end{equation}
		\item[$(iii)$] $H_n(z)\to f(z)$ uniformly for $|z|<(1+2\gamma)^{-1}$.
	\end{itemize}
\end{itemize}
\end{theorem}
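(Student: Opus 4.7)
The plan is to write $R_n(z)$ as an explicit power series starting at degree $n+1$, bound each coefficient using the hypotheses together with Theorems~\ref{theorem1} and~\ref{theorem_new_estimate}, and then sum a (near-)geometric series. From Theorem~\ref{theorem2}, the first $n+1$ Taylor coefficients of $f$ and $H_n$ coincide. Since $f_m = h_m r_m$ for every $m$ (directly from (\ref{r_m}) under (\ref{fh-condition})) and since the $m$-th Taylor coefficient of $H_n$ equals $h_m (r_0/n) S_m$, I would obtain
$$
R_n(z) = \sum_{m=n+1}^{\infty} h_m\!\left(r_m - \frac{r_0}{n}\,S_m\right)\! z^m,
$$
and hence, using $|h_m|\le 1$ for $m\ge n+1$,
$$
|R_n(z)| \le \sum_{m=n+1}^{\infty}\!\left(|r_m| + \frac{|r_0|}{n}\,|S_m|\right)\! |z|^m.
$$

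The $\max_k|\lambda_k|$-estimates in (i) of each part are immediate from the identification $s_m = (n/r_0)\,r_m$ given by (\ref{syst2}): in (a) the hypothesis gives $|s_m|\le a^m$, so Theorem~\ref{theorem1} yields $\max_k|\lambda_k|\le (1+\varepsilon_n)a$; in (b) it gives $|s_m|\le \gamma m\,a^m$, i.e.\ the $v=1$ case of Theorem~\ref{theorem_new_estimate}, whose (\ref{sigma_lambda_est_new}) reduces to $(1+2\gamma)a$ since $n^{0}=1$. Analyticity of $H_n$ in the claimed disks follows because $h$ is analytic at least in $|w|<1$ (a consequence of $|h_m|\le 1$ eventually), and in both disks $|\lambda_k z|<1$ for every $k$.

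For the remainder in case (a), I would bound $|S_m|\le n\max_k|\lambda_k|^m \le n(1+\varepsilon_n)^m a^m$ and then use $\tfrac{1}{n}\le (1+\varepsilon_n)^m$ to get a single geometric majorant
$$
|r_m| + \frac{|r_0|}{n}\,|S_m| \le 2|r_0|\,(1+\varepsilon_n)^m a^m,\qquad m\ge n+1,
$$
summing to $\tfrac{2|r_0|((1+\varepsilon_n)a|z|)^{n+1}}{1-(1+\varepsilon_n)a|z|}$ on the stated disk. Then I would pull out $(1+\varepsilon_n)^{n+1}$ from the numerator and bound it by (a multiple of) $n^2$ via the explicit definition $\varepsilon_n = 2(\ln n - \ln\ln n)/n$ and the inequality $(1+x)^{n+1}\le e^{(n+1)x}$, yielding (\ref{R_n_estimated}). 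In case (b), writing $\rho:=(1+2\gamma)a|z|$, the split
$$
|r_m| + \frac{|r_0|}{n}\,|S_m| \le \frac{|r_0|\gamma m}{n}\,a^m + |r_0|\,\rho^{m}/((1+2\gamma)a)^0
$$
produces a purely geometric piece that contributes $|r_0|\rho^{n+1}/(1-\rho)$ and an arithmetico-geometric piece handled via the closed form $\sum_{m\ge n+1} m x^m = x^{n+1}\bigl((n+1)-nx\bigr)/(1-x)^2$ with $x=a|z|\le \rho$; using $(1-a|z|)^{-2}\le (1-\rho)^{-2}$, $(n+1)/n\le 2$, and $(1-\rho)^{-1}\le (1-\rho)^{-2}$ to unify the two contributions delivers (\ref{R_n_estimated_2}).

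The uniform convergence statements (iii) then follow directly: in (a), for any fixed $|z|<a^{-1}$ one has $\varepsilon_n\to 0$ so eventually $(1+\varepsilon_n)a|z|<1$, while $n^2(a|z|)^{n+1}\to 0$; in (b), once $|z|<((1+2\gamma)a)^{-1}$ the right-hand side of (\ref{R_n_estimated_2}) decays geometrically in $n$. The main obstacle I foresee is the bookkeeping in case (b): matching the $(1-\rho)^{-2}$ exactly requires the arithmetico-geometric summation, the monotonicity $1/(1-a|z|)\le 1/(1-\rho)$, and the absorption $(n+1)/n\le 2$ to conspire; a secondary delicate point is extracting the factor $n^2$ in case (a) from $(1+\varepsilon_n)^{n+1}$ by invoking the specific form of $\varepsilon_n$.
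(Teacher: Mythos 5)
Your proposal is correct and follows essentially the same route as the paper: expand $R_n(z)=\sum_{m\ge n+1}h_m\bigl(r_m-\tfrac{r_0}{n}S_m\bigr)z^m$, deduce the $\lambda_k$-bounds from Theorems~\ref{theorem1} and \ref{theorem_new_estimate} via $s_m=(n/r_0)r_m$, bound $|S_m|\le n(\max_k|\lambda_k|)^m$ for $m>n$, and sum a geometric series in case $(a)$ and an arithmetico-geometric plus geometric pair (with exactly the closed form $\sum_{m\ge n+1}mx^m=x^{n+1}((n+1)-nx)/(1-x)^2$) in case $(b)$, followed by the same constant absorptions. The only cosmetic difference is that in $(a)$ you merge the two tails into one geometric majorant with ratio $(1+\varepsilon_n)a|z|$ before extracting the factor $n^2$, whereas the paper keeps them separate and bounds $1/n+(1+\varepsilon_n)^{n+1}\le 2n^2$; both reduce to the same elementary estimate on $(1+\varepsilon_n)^{n+1}$.
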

\begin{proof}
By the change of variables we can reduce the proof to the case of $a=1$. 

Let us start with $(a)$. Since $|r_m|\le \frac{|r_0|}{n}$ for $m=1,2,\ldots$, we have $|s_m|\le 1$ for $m=1,\ldots,n$ and therefore $|\lambda_k|\le 1+\varepsilon_n$ by Theorem~\ref{theorem1}. This implies that $|S_m|\le n(1+\varepsilon_n)^m$ for $m\ge n+1$. Recall the definition (\ref{r_m}) and that $\mu=r_0$.
Consequently, taking into account all the assumptions,
$$
|R_n(z)|= \left|\sum_{m=n+1}^\infty \left(r_m -\frac{r_0}{n}S_m\right)h_mz^m\right|\le\frac{|r_0|}{n} \sum_{m=n+1}^\infty \left(1 +n(1+\varepsilon_n)^m\right)|z|^m
$$
$$
\le \frac{|r_0|}{n} \left(\frac{|z|^{n+1}}{1-|z|}+ \frac{n|(1+\varepsilon_n)z|^{n+1}}{1-(1+\varepsilon_n)|z|}\right)
\le \frac{|r_0|(1/n+(1+\varepsilon_n)^{n+1})|z|^{n+1}}{1-(1+\varepsilon_n)|z|},\quad |z|<\frac{1}{1+\varepsilon_n}.
$$
To get (\ref{R_n_estimated}),  note that $1/n+(1+\varepsilon_n)^{n+1}\le 2 n^2$ for $n\ge 2$.

For $|z|\le (1-\delta)(1+\varepsilon_n)^{-1}$, where $\delta\in(0,1)$, we get
$$
|R_n(z)|\le 2|r_0|n^2 (1-\delta)^{n+1}/\delta.
$$
This implies that $|R_n(z)|\to 0$ uniformly for $|z|<1$, recalling that $\varepsilon_n\to 0$ as $n\to \infty$.

\medskip

Now we consider $(b)$. Since $|r_m|\le \frac{|r_0|}{n}\gamma m$ for $m=1,2,\ldots$, we have $|s_m|\le \gamma m$ for $m=1,\ldots,n$ and therefore $|\lambda_k|\le 1+2\gamma$ by Theorem~\ref{theorem_new_estimate}. This implies that $|S_m|\le n(1+2\gamma)^m$ for $m\ge n+1$.
Consequently, if $|z|<(1+2\gamma)^{-1}$, then
$$
|R_n(z)|\le\frac{|r_0|}{n} \sum_{m=n+1}^\infty \left(\gamma m +n(1+2\gamma)^m\right)|z|^m
$$
$$
=\frac{|r_0|}{n} \left(\gamma\frac{(n+1-n|z|)|z|^{n+1}}{(1-|z|)^2} +n\frac{|(1+2\gamma)z|^{n+1}}{1-(1+2\gamma)|z|}\right)
\le \frac{2|r_0||(1+2\gamma)z|^{n+1}}{(1-(1+2\gamma)|z|)^2}. 
$$

For $|z|\le (1-\delta)(1+2\gamma)^{-1}$, where $\delta\in(0,1)$, we get
$$
|R_n(z)|\le 2|r_0| (1-\delta)^{n+1}/\delta^2.
$$
This implies that $|R_n(z)|\to 0$ uniformly for $|z|<(1+2\gamma)^{-1}$.
\end{proof}

\subsection{The number of arithmetic operations. Comparison with other Pad\'e-type problems for amplitude and frequency sums}
\label{section3.2}
From the point of view of necessary arithmetic operations, calculating the amplitude and frequency sums (\ref{AFS}) and $h$-sums (\ref{h-sums}) for each fixed $z$ and known $\lambda_k$, $\mu_k$ and $h(\lambda_kz)$ requires,
generally speaking, $n$ multiplications ($\mu_k$ or
$\lambda_k$ by $h(\lambda_k z)$) and $n$ summations (the sum of the values obtained). On the other hand, calculating the sums (\ref{equal_AFS}) requires $n$ summations and just one multiplication (additionally note that $\mu$ is independent of $n$ and is only determined by $r_0$). This reduction in arithmetic complexity lies in the circle of problems considered by P. Chebyshev. In particular, this was his motivation in obtaining the famous quadrature with equal weights, see \cite[Section 10, \S3]{Krylov} and \cite[Section VI, \S4]{Natanson}. We will come back to this quadrature in Section~\ref{Chebyshev_quadrature} in the context of the sums (\ref{equal_AFS}).

\medskip
Now we compare Theorems~\ref{theorem2} and~\ref{theorem_Pade_remainder} with the corresponding ones for (\ref{AFS}) and (\ref{h-sums}). The result for (\ref{h-sums}) is proved in \cite{Dan2008} and can be summarised as follows under assumptions of Theorem~\ref{theorem2}: {\it there always  exists a uniquely determined set $\Lambda_n=\{\lambda_k\}_{k=1}^n$ such that}
 $$
 f(z)=\sum_{k=1}^n\lambda_kh(\lambda_k z)+R_n(z),\qquad R_n(z)=O(z^n),\qquad z\to 0.
 $$
 The set $\Lambda_n$ is the solution to the system (\ref{Newton_MP}) with
\begin{equation}
\label{syst_h_sums}
s_{m}=r_{m-1}, \qquad m=1,\ldots,n.
\end{equation}

Thus, it can be seen that the Pad\'e interpolation schemes for $H_n$ and $h$-sums $\mathcal{H}^*_n$ are similar. In both cases the solution always exists and is unique under the assumptions of Theorem~\ref{theorem2}. Moreover, the corresponding rates of interpolation ($O(z^n)$ and $O(z^{n+1})$) just slightly differ and directly depend on the number of free parameters. This similarity clearly underlines the advantage of $H_n$ over $\mathcal{H}^*_n$ in the sense of the number of required arithmetic operations discussed at the beginning of this subsection.

\medskip

The Pad\'e interpolation problem for the amplitude and frequency sums (\ref{AFS}) is more delicate. First of all, it is {\it not always solvable} for given $f$ and fixed $h$ and $n$, even if the assumptions of Theorem~\ref{theorem2} are met. As shown in \cite{DanChu2016}, its solvability relies on the properties of the (possibly {\it non-unitary}) polynomial
\begin{equation}
\label{generating_polynomial}
P^*_n(\lambda):=\sum_{m=0}^n
\sigma^*_{m} \lambda^{m}= \left|
\begin{array}{ccccc}
1 & \lambda & \lambda^2 & \ldots & \lambda^n\\
s_0 & s_1 & s_2 & \ldots & s_n\\
s_1 & s_2 & s_3 & \ldots & s_{n+1}\\
\ldots & \ldots & \ldots & \ldots & \ldots\\
s_{n-1} & s_n & s_{n+1} & \ldots & s_{2n-1}\\
\end{array}
\right|,
\end{equation}
which is an analogue of (\ref{sigma}) and (\ref{P_n}) for the following weighted version of (\ref{Newton_MP}),
\begin{equation}
\label{Prony_MP}
S^*_m = s_m,\qquad \text{where}\qquad S^*_m:=\sum_{k=1}^n\mu_k\lambda_k^m,\qquad m=0,\ldots,2n-1,
\end{equation} 
and the moments $s_m$ are defined as follows:
\begin{equation}
\label{syst_AFSs}
s_{m}=r_{m},\qquad m=0,\ldots,2n-1.
\end{equation}
Namely, it is proved in \cite{DanChu2016} that for the functions $f$ and $h$ satisfying the assumptions of Theorem~\ref{theorem2}, it holds with {\it uniquely determined $\{\mu_k,\lambda_k\}_{k=1}^n$} that
\begin{equation}
\label{Pade_AFS}
f(z)=\sum_{k=1}^n\mu_k h(\lambda_k z)+R_n(z),\qquad R_n(z)=O(z^{2n}),
\end{equation}
if and only if {\it the polynomial $(\ref{generating_polynomial})$ is of degree $n$ and all its roots are pairwise distinct}. 
This condition on $P_n^*$ is quite strong and can be unsatisfied even for simple and natural sequences of moments $s_m$ in (\ref{syst_AFSs}), e.g. $s_m=m+1$ or $s_m=a^m$, where $m=0,1,\ldots$ (see \cite{DanChu2016}), whilst the corresponding problems for (\ref{equal_AFS}) and (\ref{h-sums}) still have unique solutions. This disadvantage of the amplitude and frequency sums (\ref{AFS}) with respect to $H_n$ and $h$-sums is however quite compensated by the doubled rate of interpolation, $O(z^{2n})$. 

Note that the system (\ref{Prony_MP}) and particular cases of the identity (\ref{Pade_AFS}) appear in different areas of analysis and approximation theory and are closely related to Hankel matrices, Gauss quadratures, classical Pad\'e fractions, exponential sums and Hamburger, Stieltjes and Hausdorff moment problems. A survey on these connections can be found e.g. in \cite[Section 2]{DanChu2016} or \cite{Lyubich}. Moreover, the system (\ref{Prony-problem}) is the main tool to solve the {\it original Prony $($simple exponential$)$ interpolation problem}
\begin{equation}
\label{Prony_exponential}
\mathcal{H}_n(m;\exp)=\sum_{k=1}^n\mu_k \exp(\lambda_k m)=g(m),\quad \mu_k\in \mathbb{C}\setminus \{0\},\,\,\lambda_k\in \overline{\mathbb{C}},\quad m=0,\ldots,2n-1,
\end{equation}
where $\lambda_k$ are assumed pairwise distinct. Let us mention that (\ref{Prony_exponential})
 is well-studied analytically and has numerous applications (see \cite{Lyubich,Prony_method_survey_2,DanChu2016} for a nice survey). Moreover, there are several numerical approaches for solving (\ref{Prony-problem}) and its variations, see  \cite{Numerical_Prony_method_survey_1,Numerical_Prony_method_survey_2,Numerical_Prony_method_survey_3,Numerical_Prony_method_survey_4}. But still, from the above-mentioned condition on (\ref{generating_polynomial}) one can deduce that {\it the Prony problem $(\ref{Prony_exponential})$ can have no solution for some $g(m)$, $m=0,\ldots,2n-1$}. We will come back to this issue in Section~\ref{Prony_section}.

\subsection{Applications of the Pad\'e interpolation and corresponding estimates}
\label{section3.3}
Now we give several examples how Theorem~\ref{theorem2} can be applied in numerical analysis. We compare these applications with the corresponding ones for (\ref{AFS}) and (\ref{h-sums}) in appropriate places.

\subsubsection{The case $f(z)=h(az)$ for a complex $a\neq 0$} Under assumptions of Theorem~\ref{theorem2}, we  have $\mu=1$ and $s_{m}=na^m$, $m=1,\ldots,n$.
The solution to (\ref{Newton_MP}) is then
$\Lambda_n=\{a\}_{k=1}^n$. Thus
$$
f(z)=h(az)\equiv H_n(z)=\frac{1}{n}\sum_{k=1}^nh(az),\qquad \text{i.e. } R_n(z)\equiv 0.
$$
In particular, this means that $H_n$ do not generate  extrapolation operators appearing in a~similar situation for $h$-sums as in \cite{DanChu2011,Chu_extrapolation}.

\subsubsection{Rational interpolation} Choosing $h(z)=1/(z-1)=-\sum_{m=0}^\infty z^m$ in Theorem~\ref{theorem2} leads to rational interpolants of the form
$$
H_n^{\sf rat}(z):=\frac{\mu}{n} \sum_{k=1}^{n}\frac{1}{\lambda_kz-1}.
$$
For example, if $f(z)\equiv 1$, then
$\mu=-1$, $s_m=0$ for $m=1,\ldots,n$ and thus $\Lambda_n=\{0\}_{k=1}^n$. Consequently, the corresponding 
$H_n^{\sf rat}(z)=-\frac{1}{n} \sum_{k=1}^{n}\frac{1}{0\cdot z-1}\equiv 1$,
i.e. our interpolant coincides with $f$. Such a coincidence clearly happens for all $f(z)=H_n^{\sf rat}(z)$ with arbitrarily chosen $\{\lambda_k\}_{k=1}^n$ due to the uniqueness of $\Lambda_n$.

If $h(z)=1/(z-1)$ in (\ref{AFS}) and (\ref{h-sums}), then interpolants to~$f$ are correspondingly the well-known $[n-1,n]$--type Pad\'e fractions (see e.g. \cite[Subsection 2.3]{DanChu2016}) and rational $h$-sums called {\it simple partial fractions} whose properties are actively studied \cite{survey}. In comparison with $H_n^{\sf rat}$, the calculation of the $h$-sums  require more arithmetic operations  whilst the Pad\'e fractions may not exist for some $f$ and $n$ (see Section~\ref{section3.2}).

\subsubsection{Pad\'e interpolation by exponential sums} Another important particular case of $H_n$ is when one chooses $h(z)=\exp(z)$ and obtains Pad\'{e} exponential sums of the form (\ref{exp-sums}), i.e.
$$
H_n^{\sf exp}(z)=\frac{\mu}{n} \sum_{k=1}^{n}\exp(\lambda_kz).
$$

Let us interpolate $f(z)=\cos(z)$ by $H_2^{\sf exp}(z)$. We have 
$$
\mu=1,\qquad s_1=0,\qquad s_2=-2,\qquad P_2(\lambda)=\lambda^2+1.
$$
Consequently, $\Lambda_2=\{i,-i\}$ and we get the well-known identity
$$
\cos z=H_2^{\sf exp}(z)=\frac{\exp(iz)+\exp(-iz)}{2}.
$$
Surprisingly, this identity appears for  $f$ and $h$ mentioned for any even $n$ in $H_n^{\sf exp}(z)$. 

\medskip

Let us emphasize that interpolants $H_n^{\sf exp}$ always exist for a given $f$ with ${f_0\neq 0}$ (since the condition (\ref{fh-condition}) is always satisfied), unlike exponential sums $\mathcal{H}_n(z;\exp)$ of the form (\ref{AFS}) (see Section~\ref{section3.2}).

\subsubsection{Chebyshev's quadrature}
\label{Chebyshev_quadrature} Let us use $H_n$ to interpolate the function
\begin{equation}
\label{integral}
f(x)=\frac{1}{x}\int_{-x}^x h(t)\rho(t)dt, \qquad x>0,
\end{equation}
where $f$ and $h$ satisfy (\ref{fh}) and the {\it integral weight} $\rho=\rho(t)\ge 0$ for $t\in [-x,x]$.

As an example, take $\rho(x)\equiv 1$. Then by (\ref{fh}) clearly
$$
f(x)=\frac{1}{x}\int_{-x}^x h(t)\, dt=
\sum_{m=0}^\infty h_m \left(\frac{1}{x}\int_{-x}^x t^m dt\right)=\sum_{m=0}^\infty \frac{1+(-1)^m}{m+1}h_m z^m,
$$
and from Theorem~\ref{theorem2} we deduce that $\mu =2$ and $\Lambda_n$ is the solution to the system (\ref{Newton_MP}) with 
\begin{equation}
\label{Cheb_syst}
s_m=\frac{n}{2}\cdot\frac{1+(-1)^m}{m+1}, \qquad m=1,\ldots,n.
\end{equation}
Note that $\mu$ and $\Lambda_n$ are independent of $h$ and are universal in this sense. The system (\ref{Newton_MP}) with (\ref{Cheb_syst}) and the corresponding polynomials $P_n$ of the form (\ref{P_n}) are well studied \cite[Section 10, \S3]{Krylov}. Thus for a fixed $x>0$ one gets the interpolation formula
\begin{equation}
\label{Cheb_quadr}
\frac{1}{x}\int_{-x}^x h(t)dt=\frac{2}{n}\sum_{k=1}^nh(\lambda_k x) +R_n(x),
\end{equation}
that is nothing else but \textit{Chebyshev's quadrature with equal weights} \cite[Section 10, \S3]{Krylov},
whose frequencies $\lambda_k$, the roots of $P_n$, are real and belong to the segment $[-1,1]$ only for $n=1,\ldots,7,9$. For other $n$ there are complex $\lambda_k$ in (\ref{Cheb_quadr}). In particular, this is proved by S. Bernstein for $n\ge 10$. Further information on the distribution of $\lambda_k$ can be found in \cite{Kuzmin,Natanson}. What is more, Theorem~\ref{theorem2} implies that (\ref{Cheb_quadr}) is exact for polynomials $h$ of degree $\le n$ in the sense that $R_n(x)\equiv 0$ for such $h$. Moreover, for even $n$ the quadrature formula is exact for polynomials $h$ of degree $\le n+1$ as $S_{n+1}=0$. One can find more information on (\ref{Cheb_quadr}), including estimates for the remainder $R_n(x)$, in \cite[Section 10, \S3]{Krylov}.

\medskip

Let us briefly mention that if we use $H_n$ to interpolate the function
$$
f(x)=\frac{1}{x}\int_{0}^x h(t)dt, \qquad x>0,
$$
then $\mu =1$ and $\Lambda_n$ is the solution to the system (\ref{Newton_MP}) with 
\begin{equation}
\label{Cheb_syst_01}
s_m=\frac{n}{m+1}, \qquad m=1,\ldots,n.
\end{equation}
Thus, for a fixed $x>0$, one gets {\it shifted Chebyshev's quadrature}
\begin{equation}
\label{Cheb_quadr_shifted}
\frac{1}{x}\int_{0}^x h(t)dt=\frac{1}{n}\sum_{k=1}^nh(\lambda_k^* x) +R_n(x),
\end{equation}
where $\lambda_k^*$ are generated by the frequencies in (\ref{Cheb_quadr}) appropriately shifted to a neighbourhood of $(0,1)$. Note that the asymptotic behaviour of $\lambda_k$ and thus $\lambda_k^*$ is fully studied in \cite{Kuzmin}.
\medskip

In a similar manner Theorem~\ref{theorem2} leads to Chebyshev-type quadrature formulas for  integrals (\ref{integral}) with other weights $\rho$.

\medskip

For (\ref{integral}) with $\rho(x)\equiv 1$, one can find quadratures based on the $h$-sums in \cite{Dan2008} but they still require more arithmetic operations than $H_n$  (see Section~\ref{section3.2}). If the integral (\ref{integral}) with $\rho(x)\equiv 1$ is interpolated by (\ref{AFS}), then one obtains the well-known Gauss quadrature (see \cite[Subsection 2.2]{DanChu2016}). If $\rho(x)=(1-x^2)^{-1/2}$ in (\ref{integral}), the corresponding Gauss-type quadrature (usually called Gauss-Chebyshev or Hermite quadrature) has equal amplitudes as $H_n$ does (see \cite[Sunsection 2.2]{DanChu2016} and \cite[Section VI, \S 4]{Natanson}). However, these quadratures have different nature, namely, the ones based on $H_n$ have equal amplitudes for any weight~$\rho$ in the integral (\ref{integral}), whilst the ones based on (\ref{AFS}) have this property only for  $\rho(x)=(1-x^2)^{-1/2}$ as shown by K. Posse and J. Geronimus, see \cite[Section VI, \S\S 4--5]{Natanson}.

\subsubsection{Numerical differentiation in a neighbourhood of $z=0$}
\label{diff_section} Now let us interpolate
$$
f(z)=h_0t+zh'(z)=h_0t+\sum_{m=1}^\infty mh_m z^m,\qquad h_0\neq 0,
$$
where $t>0$ is parameter, by sums $H_n$. We clearly have $r_m=m$ for $m=1,2,\ldots$ and thus
$$
\mu=t>0,\qquad s_m=\frac{n}{t}\, m, \qquad m=1,\ldots,n.
$$
Solving the system (\ref{Newton_MP}) leads to the identity
$$
h_0t+zh'(z)=\frac{t}{n}\sum_{k=1}^nh(\lambda_k z) +R_n(z),
$$
where $\lambda_k=\lambda_k(t,n)$ are independent of $h$ and  are universal in this sense. Finally, the following interpolation formula holds true:
\begin{equation}
\label{derivative}
zh'(z)= t \left(-h(0)+\frac{1}{n}\sum_{k=1}^nh(\lambda_k z)\right) +R_n(z),\qquad R_n(z)=O(z^{n+1}),
\end{equation}
that is exact for polynomials $h$ of degree $\le n$, i.e. $R_n(z)\equiv 0$ in that case. 

Now we estimate the remainder and $|\lambda_k|$ in (\ref{derivative}) using Theorem~\ref{theorem_Pade_remainder}$(b)$ and that $|r_m|=m\le \frac{t}{n} \gamma ma^m$ with $\gamma=n$ and $a=t^{-1/n}$, where $t\ge 1$. 
Fix $n$ and suppose that $|h_m|\le 1$ for all $m$. Then by Theorem~\ref{theorem_Pade_remainder}$(b)$,
$$
\max_{k=1,\ldots,n}|\lambda_k|\le \frac{2n+1}{t^{1/n}}. 
$$
This, in particular, implies that $\max_{k=1,\ldots,n}|\lambda_k|\to 0$ as $t\to \infty$,
i.e. the nodes $\lambda_k=\lambda_k(t)$ in  (\ref{derivative}) tend to $z=0$ as $t$ grows.  A similar behaviour of nodes is observed in \cite{DanChu2011,DanChu2016} in numerical differentiation formulas based on amplitude and frequency sums and $h$-sums. Unfortunately, there is a compensation of this phenomenon: $\mu=t \to \infty$ as $t\to \infty$.

Furthermore,  we deduce for $|z|<t^{1/n}/(2n+1)$ from Theorem~\ref{theorem_Pade_remainder} that
$$
 |R_n(z)|\le \frac{2t|(2n+1)t^{-1/n}z|^{n+1}}{(1-(2n+1)t^{-1/n}|z|)^2}= \frac{2t^{-1/n}|(2n+1)z|^{n+1}}{(1-(2n+1)t^{-1/n}|z|)^2},\qquad n\ge 2.
$$

Say, if $t=2^n$, then it holds for (\ref{derivative}) and $|z|<1/(n+\tfrac{1}{2})$ that
$$
\max_{k=1,\ldots,n}|\lambda_k|\le n+\tfrac{1}{2},\qquad |R_n(z)|\le \frac{|(2n+1)z|^{n+1}}{(1-(n+\tfrac{1}{2})|z|)^2}.
$$

\medskip

Formulas similar to (\ref{derivative}) are obtained in \cite{Dan2008,Chu2010}. Again, they require more arithmetic operations than (\ref{derivative}), although have almost the same interpolation rate, $O(z^{n})$. An analogous problem for amplitude and frequency sums (with the remainder $O(z^{2n})$) is not solvable at all and can be managed only after proper regularisation \cite[Section 5]{DanChu2016}.

\section{Prony interpolation by $H_n$}
\label{Prony_section}

Now we use the results and remarks given above for the most important part of our exposition --- the Prony-type interpolation by usual (i.e. not generalized) exponential sums with equal weights. Recall that interpolation by exponential sums has many practical applications, e.g. in analysis of time series, and is now widely studied  (see \cite{Lyubich,Prony_method_survey_2,Numerical_Prony_method_survey_1,Numerical_Prony_method_survey_2,Numerical_Prony_method_survey_3,Numerical_Prony_method_survey_4,DanChu2016} and references therein).

\subsection{Main theorem about the Prony interpolation by $H_n$}
\label{section4.1}

Recall that we deal with the sums (\ref{equal_AFS}), where $h(z)=\exp(z)$, i.e. with the sums (\ref{exp-sums}): 
$$
H_n^{\sf exp}(z)=\frac{\mu}{n}\sum_{k=1}^n \exp(\lambda_k z).
$$
Within this framework, we aim to interpolate the table (\ref{table}):
$$
\left\{m, g(m)\right\}_{m=0}^n,\qquad g(0)\neq 0,
$$
where the sequence $\{g(m)\}_{m=0}^n$ is generated by a complex-valued function $g$, see~(\ref{Prony-problem}). Before moving forward, recall the original Prony exponential interpolation (\ref{Prony_exponential}) and the information around (\ref{Prony_exponential}).
\begin{theorem}
\label{theorem5}
Given a table $(\ref{table})$, there always exist uniquely determined $($up to a period of the complex exponent$)$ numbers $\mu\neq 0$ and $\Lambda_n=\{\lambda_k\}_{k=1}^n$, with $\lambda_k\in \overline{\mathbb{C}}$, such that
\begin{equation}
\label{exp_prob}
H_n^{\sf exp}(m)=\frac{\mu}{n}\sum_{k=1}^n \exp(\lambda_k m)=g(m),\qquad m=0,\ldots,n.
\end{equation}

More precisely, the numbers can be determined as follows:
$$
\mu=g(0)\qquad \text{and}\qquad \exp(\lambda_k)= l_k,\qquad k=1,\ldots,n,
$$
where $l_k\in \mathbb{C}$, $k=1,\ldots,n$, are the solutions to the Newton-type moment problem
\begin{equation}
\label{eq_l}
\sum_{k=1}^n l_k^m=\frac{n}{g(0)}\,g(m),\qquad m=1,\ldots,n.
\end{equation}

Additionally, 
\begin{itemize}
\item[(a)] if $|g(m)|\le \frac{|g(0)|}{n}\,a^m$ for some $a\ge 0$ and all $m=1,\ldots,n$, then\footnote{One can take into account the periodicity of the exponential function to suppose that $|\Im \lambda_k|\le \pi$.}
$$
\max_{k=1,\ldots,n}|l_k|\le (1+\varepsilon_n)a\quad \Rightarrow \quad -\infty\le \Re \lambda_k \le \ln a +\varepsilon_n;
$$
\item[(b)] if $|g(m)|\le \frac{|g(0)|}{n}\,\gamma ma^m$ for some $\gamma>0$ and $a\ge 0$ for all $m=1,\ldots,n$, then
$$
\max_{k=1,\ldots,n}|l_k|\le (1+2\gamma) a \quad \Rightarrow \quad -\infty\le \Re \lambda_k \le \ln a +\ln(1+2\gamma).
$$
\end{itemize}
\end{theorem}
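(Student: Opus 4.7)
The plan is to reduce the interpolation system (\ref{exp_prob}) to a Newton moment problem of the form (\ref{Newton_MP}) via the substitution $l_k := \exp(\lambda_k)$, and then to invoke the solvability discussion following (\ref{P_n}) for existence and uniqueness of $\{l_k\}$, together with Theorems~\ref{theorem1} and~\ref{theorem_new_estimate} for the two estimates.

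First I would substitute $l_k = \exp(\lambda_k)$, so that $H_n^{\sf exp}(m) = \frac{\mu}{n}\sum_{k=1}^n l_k^m$ for every $m \in \{0,1,\ldots,n\}$. The equation (\ref{exp_prob}) at $m = 0$ gives $\mu = g(0)$, which is nonzero by the hypothesis $g(0) \neq 0$ in (\ref{table}). The remaining equations $m = 1, \ldots, n$ become precisely
$$\sum_{k=1}^n l_k^m = \frac{n}{g(0)}\, g(m), \qquad m = 1, \ldots, n,$$
i.e.\ the Newton moment problem stated as (\ref{eq_l}). By the discussion after (\ref{sigma})--(\ref{P_n}), this system always admits a unique solution $\{l_k\}_{k=1}^n \subset \overline{\mathbb{C}}$, namely the $n$ roots of the unique unitary polynomial $P_n$ built from $s_m = n g(m)/g(0)$ through the Newton-Girard formulas. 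Recovering $\lambda_k$ from $\exp(\lambda_k) = l_k$ then determines each $\lambda_k$ uniquely modulo an integer multiple of $2\pi i$, with the convention that $l_k = 0$ corresponds to $\lambda_k = \infty \in \overline{\mathbb{C}}$, equivalently $\Re\lambda_k = -\infty$. This gives both the existence and uniqueness claims.

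For estimate (a), the hypothesis $|g(m)| \le \frac{|g(0)|}{n}a^m$ translates directly to $|S_m(\{l_k\})| \le a^m$ for $m = 1, \ldots, n$, and Theorem~\ref{theorem1} immediately yields $\max_k |l_k| \le (1+\varepsilon_n) a$. Since $\Re\lambda_k = \ln|l_k|$ whenever $l_k \neq 0$, the elementary inequality $\ln(1+\varepsilon_n) \le \varepsilon_n$ produces $\Re\lambda_k \le \ln a + \varepsilon_n$, while $l_k = 0$ accounts for the lower end $-\infty$. Similarly, for (b) the hypothesis $|g(m)| \le \frac{|g(0)|}{n}\gamma m a^m$ is exactly the case $v = 1$ of Theorem~\ref{theorem_new_estimate}, giving $\max_k |l_k| \le (1+2\gamma) a$ and hence $\Re\lambda_k \le \ln a + \ln(1+2\gamma)$ by the same logarithmic step. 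No genuine obstacle arises: once the substitution $l_k = \exp(\lambda_k)$ is made, every assertion of the theorem becomes a direct consequence of the moment-problem apparatus of Section~\ref{section2}. The only bookkeeping subtlety is the handling of $l_k = 0$ in $\overline{\mathbb{C}}$ and the $2\pi i$-periodicity of $\exp$, both of which are absorbed into the statement.
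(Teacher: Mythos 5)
Your proposal is correct and follows essentially the same route as the paper's own proof: substitute $l_k=\exp(\lambda_k)$, read off $\mu=g(0)$ from $m=0$, reduce to the always uniquely solvable Newton moment problem, and apply Theorems~\ref{theorem1} and~\ref{theorem_new_estimate} together with $|l_k|=e^{\Re\lambda_k}$. You even make explicit the step $\ln(1+\varepsilon_n)\le\varepsilon_n$ that the paper leaves implicit.
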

\begin{proof} From (\ref{exp_prob}) with $m=0$ we immediately get $\mu=g(0)\neq 0$. Then for $m=1,\ldots,n$ in  (\ref{exp_prob}) we use the idea from the original Prony method consisting in the exchange $ \exp(\lambda_k)= l_k$ to obtain the system (\ref{eq_l}). This is actually the system (\ref{Newton_MP}) with $s_m=\frac{n}{g(0)}\,g(m)$ that  always  has a unique (complex) solution $\{l_k\}_{k=1}^n$. We may then determine $\lambda_k$ by assuming
\begin{equation}
\label{lambda_LN}
\lambda_k:=\left\{
\begin{array}{ll}
-\infty, & l_{k}=0,\\
{\rm ln\,} l_k, & l_k\neq 0,\\
\end{array}
\right.
\qquad k=1,\ldots,n.
\end{equation}

The clauses $(a)$ and $(b)$ follows from Theorems~\ref{theorem1} and \ref{theorem_new_estimate} and that $|l_k|=e^{\Re \lambda_k}$.
\end{proof}

Let us emphasize that we are unaware of any results similar to Theorem~\ref{theorem5} although the idea behind it is very close to Prony's one.

\subsection{Comparison with the original Prony  exponential interpolation problem}
\label{section4.2}
Summarising the previous subsection, the Prony interpolation problem (\ref{Prony-problem}) is {\it always solvable in a unique way}. What is more, {\it $|e^{\lambda_k}|$, $|\mu|$ and $|\lambda_k|$ can be efficiently estimated} under several natural assumptions on the sequence $\{g(m)\}$. 

This is in sharp contrast to the original Prony problem (\ref{Prony_exponential}). Recall that by the exchange  $\exp(\lambda_k)= l_k$ one can easily come from (\ref{Prony_exponential}) to the polynomial (\ref{generating_polynomial}) and the system (\ref{Prony_MP}), where $s_m$ should be exchanged for $g(m)$. Consequently, the Prony problem (\ref{Prony_exponential}) can be unsolvable in the general case, as follows from the discussion in Section~\ref{section3.2}. Then the numerical methods, mentioned in Section~\ref{section3.2}, hardly can help as the corresponding iterative processes become divergent if the corresponding error (residual) is required to vanish. Several theoretical examples can be found in \cite[Section 7]{DanChu2016} to confirm this statement. Indeed, there are examples of $s_m$ such that $|s_m|\le 1+\varepsilon$, where $\varepsilon>0$, and $\mu_1=\mu_1(\varepsilon)\to \infty$ or $\lambda_1=\lambda_1(\varepsilon)\to \infty$ as $\varepsilon \to 0$. Some general results on this can be also found in \cite{Numerical_Prony_method_survey_1}.

Furthermore, in spite of the huge bibliography related to the Prony method and generalized exponential sums (see \cite{Lyubich,Prony_method_survey_2,Numerical_Prony_method_survey_1,Numerical_Prony_method_survey_2,Numerical_Prony_method_survey_3,Numerical_Prony_method_survey_4,DanChu2016} and references therein), we could not find any more or less general estimates for amplitudes and frequencies similar to those in Theorem~\ref{theorem5}. Probably, they just do not exist because of the above-mentioned divergence examples from \cite[Section 7]{DanChu2016} and the results from \cite{Numerical_Prony_method_survey_1}. As for particular cases, several estimates were obtained in \cite[Sections 5 and 6]{DanChu2016} for special sequences. Moreover, some conclusions about $\mu_k$ and $\lambda_k$ (e.g. that they are real, positive or belonging to the segment $[0,1]$) can be made if $g(m)$ satisfies the criteria due to Hamburger, Stieltjes or Hausdorff, related to the classical moment problems \cite{Classical_MP}, see also \cite[Chapter VI, \S3]{Braess}. Furthermore, some nice estimates can be directly derived from properties of the roots of some classical orthogonal polynomials of the form (\ref{generating_polynomial}) generated by properly chosen sequences $\{s_m\}$, see e.g. \cite{Lyubich} for the connection of (\ref{AFS}) and classical orthogonal polynomials.

\subsection{Examples and further remarks}
\label{section4.3}
 We start with several simple examples.
\begin{example}[Chebyshev's quadrature nodes]
	Let us interpolate the table
	$$
	\left\{m,\frac{1+(-1)^m}{m+1}\right\}_{m=0}^n
	$$
	by exponential sums $H_n^{\sf exp}$. By Theorem~\ref{theorem5}, we get $\mu=2$ and thus need to solve the system
	$$
	\sum_{k=1}^n l_k^{m}=\frac{n}{2}\frac{1+(-1)^m}{m+1},\qquad m=1,\ldots,n.
	$$
We have already considered it above, see (\ref{Cheb_syst}). Indeed, $\l_k$ are then the nodes in Chebyshev's quadrature (\ref{Cheb_quadr}). Then by (\ref{lambda_LN}) we obtain $\Lambda_n$.	
\end{example}
\begin{example}
If $g(z)=1/(z+1)$, then the table to interpolate is
$$
\left\{m,1/(m+1)\right\}_{m=0}^n.
$$
Clearly, $\mu=1$ and $s_m=\frac{n}{m+1}$, $m=1,\ldots,n$.
We considered this $\{s_m\}$ already around (\ref{Cheb_syst_01}) and mentioned that the corresponding solution to (\ref{Newton_MP}) is produced by the nodes of shifted Chebyshev's quadrature (\ref{Cheb_quadr_shifted}). Moreover, the behaviour of the nodes was completely studied in \cite{Kuzmin}. In particular\footnote{Very roughly speaking, $\Re l_k\in (-3\sqrt{\ln n}/n,1+3\sqrt{\ln n}/n)$ and $\Im l_k \in (-\tfrac{1}{4},\tfrac{1}{4})$.}, one can deduce from \cite[\S 7]{Kuzmin} that for $n\ge n_0$,
$$
\max_{k=1,\ldots,n} |l_k|\le 1+\frac{3\ln n}{n} \quad \Rightarrow \quad -\infty\le \Re \lambda_k \le \frac{3\ln n}{n}.
$$

Consequently, with these $\lambda_k$,
$$
\frac{1}{z+1}=H_n^{\sf exp}(z)=\frac{1}{n}\sum_{k=1}^n  \exp(\lambda_k z),\qquad z=0,1,\ldots,n.
$$

Thus we constructed exponential sums for the function $g(z)=1/(z+1)$ with $z\ge 0$. This problem, especially for exponential sums $\mathcal{H}_n(z;\exp)$, attracts much attention of different authors, see e.g. \cite{Beylkin_1/x,Hackbusch} and references therein. It is an independent interesting question to compare the above-mentioned interpolants $H_n^{\sf exp}$ with the ones based on other exponential sums.
\end{example}
\begin{example}
If $g(z)=c$, where $c\neq 0$ is a constant, then
$$
\mu=c,\qquad s_m=n,\qquad m=1,\ldots,n.
$$
Clearly, then $l_k=1$ for $k=1,\ldots,n$, and thus
$\Lambda_n=\{0\}_{k=1}^n$ and
$H_n^{\sf exp}(z)\equiv c$.

Note that the original Prony problem (\ref{Prony_exponential}) is not solvable in this case under the assumption that $\mu_k\neq 0$ and $\lambda_k$ are pairwise distinct. If the assumption is relaxed though, one gets the same result.
\end{example}
\begin{example}
Let $g(z)=z+1$. Thus the table to interpolate is
$$
\left\{m,m+1\right\}_{m=0}^n.
$$
Clearly, $\mu=1$ and $s_m=n(m+1)$ for $m=1,\ldots,n$. From this we can find $\Lambda_n$ to construct the required $H_n^{\sf exp}$. 
From Theorem~\ref{theorem5}$(b)$ for $\gamma=2n$ and $a=1$ we get the estimates
$$
\max_{k=1,\ldots,n}|l_k|\le 1+4n \quad \Rightarrow \quad -\infty\le \Re \lambda_k \le \ln(1+4n).
$$
These estimates however are quite pessimistic as computer experiments suggest. For instance, for $n\le 50$ calculations show that
$\max_{k=1,\ldots,n}|l_k|< 9/2$ and $\Re \lambda_k \in (0,3/2)$. What is more, $l_k$ seem to be settled on a kind of cardioid with a cusp at the origin as $n\to \infty$.

Note that the Prony problem (\ref{Prony_exponential}) is not solvable for the table under consideration. 
\end{example}

To finish the discussion, we make several remarks.

\begin{remark}
For $h$-sums of the form (\ref{h-sums-nu}) with $\eta=1,2,\ldots$ and $z=m$ we get
$$
\mathcal{H}^*_{\eta,n}(m;\exp)=\sum_{k=1}^n \lambda_k^\eta \exp(\lambda_k m),\qquad m=0,1,\ldots.
$$
Unfortunately, in this case the exchange  $\exp(\lambda_k)= l_k$ does not lead to any familiar system of equations and the corresponding interpolation problem remains unsolved. This is another advantage of $H_n$ over $\mathcal{H}^*_{\eta,n}$ within the Prony problem context.
\end{remark}
\begin{remark}
In the case of the table $\{x_m,g(m)\}_{m=0}^n$ for $n+1$ equidistant nodes $x_m:=a+(b-a)\tfrac{m}{n}\in [a,b]$, $m=0,\ldots,n$,  one should consider the sums 
$$
H_n^{\sf exp}(z;[a,b]):=\frac{\mu}{n}\sum_{k=1}^n \exp\left(\lambda_k \frac{ n(z-a)}{b-a}\right)
$$
instead of $H_n^{\sf exp}$. Indeed, 
$$
H_n^{\sf exp}(x_m;[a,b])=\frac{\mu}{n}\sum_{k=1}^n \exp (\lambda_k m)
=g(m),\qquad m=0,\ldots,n,
$$
and one can proceed as in Theorem~\ref{theorem5}.
\end{remark}

\begin{remark}
Since the Newton moment problem (\ref{Newton_MP}) always has a unique solution, in contrast to the system (\ref{syst_AFSs}), one can possibly use/adapt the numerical methods for (\ref{syst_AFSs}) (e.g. ESPRIT or MUSIC, see \cite{Numerical_Prony_method_survey_1,Numerical_Prony_method_survey_2,Numerical_Prony_method_survey_3,Numerical_Prony_method_survey_4}) for solving (\ref{Newton_MP}).  Recall that then there is no divergence problem as for  unsolvable systems (\ref{syst_AFSs}).

Furthermore, as in the case of overdetermined systems (\ref{syst_AFSs}), i.e. with $M>2n$ equations instead of $2n$, one can use numerical methods (see \cite{Numerical_Prony_method_survey_1,Numerical_Prony_method_survey_2,Numerical_Prony_method_survey_3,Numerical_Prony_method_survey_4}) to find approximate solutions to overdetermined systems (\ref{syst_h_sums}) with $M>n$ equations. This would allow to approximately solve an overdetermined interpolation problem of type (\ref{table}) for the sums (\ref{equal_AFS}).

The above-mentioned are interesting practical questions that are however out of scope of the current paper as we deal only with analytical methods here.
\end{remark}

\begin{remark} It is recently shown in \cite{Komarov2018} that for any sequence $\{\tilde{g}(m)\}_{m=1}^n$ and sufficiently large $n$ there exist pairwise distinct numbers $l_k$, $k=1,\ldots,n$, such that
$$
\sum_{k=1}^{2n+1}l_k^m=\tilde{g}(m),\qquad m=1,\ldots,n,\quad \text{and}\quad |l_k|=1,\quad k=1,\ldots,2n+1.
$$
This implies in the context of our exponential interpolation that there are $\{\phi_k\}_{k=1}^{2n+1}$ such that $\phi_k\in [0,2\pi)$ and any table  $\{m,g(m)\}_{m=0}^n$ with $g(0)\neq 0$ can be interpolated by
$$
\frac{\mu}{2n+1}\sum_{k=1}^{2n+1}\exp (\phi_k i z)=\frac{\mu}{2n+1}\sum_{k=1}^{2n+1}\left(\cos(\phi_kz)+i\sin(\phi_kz)\right).
$$
\end{remark}

\section{The proof of Theorem~\ref{theorem1}}
\label{section_proof_of_theorem1}

We first recall  the following result.
\begin{lemma}[see \cite{Chu2010}]
If $|S_m|\le a^m$ for some $a\ge 0$ and all $m=1,\ldots,n$, then
\begin{equation}
\max_{k=1,\ldots,n}|\lambda_k|\le (1+\varepsilon_n)a,
\end{equation}
where $\varepsilon_n\in (0,1)$ and satisfies the equation
\begin{equation}
\label{equal_eps}
\varepsilon_n^2-(1-\varepsilon_n)^{n+1}=0.
\end{equation}
\end{lemma}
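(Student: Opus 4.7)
The plan is to combine a coefficient bound derived from the Newton--Girard identities with a root-location argument for the polynomial $P_n(\lambda)$ whose roots are the $\lambda_k$.

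First I would reduce to $a = 1$ by the rescaling $\lambda_k \mapsto \lambda_k/a$, under which $S_m \mapsto S_m/a^m$, so that the hypothesis becomes $|S_m| \le 1$ for $m = 1, \ldots, n$. Then, by induction on $m$ using $(\ref{sigma})$, I would establish the uniform bound $|\sigma_m| \le 1$: the base case $|\sigma_1| = |S_1| \le 1$ is trivial, and the inductive step reads
\[
m|\sigma_m| \le |S_m| + \sum_{j=1}^{m-1}|S_{m-j}||\sigma_j| \le 1 + (m-1) = m.
\]
As a by-product, the same induction delivers the aggregated bound $\sum_{j=1}^{m}|\sigma_j| \le m$.

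Next I would use these coefficient bounds to show that $P_n(\lambda) \neq 0$ on $|\lambda| > 1 + \varepsilon_n$. Writing $|\lambda| = 1+\varepsilon$ and starting from
\[
\frac{|P_n(\lambda)|}{|\lambda|^n} \ge 1 - \sum_{m=1}^n \frac{|\sigma_m|}{|\lambda|^m},
\]
I would determine for which $\varepsilon$ the right-hand side is strictly positive. Inserting only the pointwise bound $|\sigma_m| \le 1$ and summing the full geometric series yields the weaker condition $(1+\varepsilon)^n(1-\varepsilon) < 1$, which gives just the coarser estimate $|\lambda_k| \le 2$ already mentioned in the paper. To upgrade this to the stated implicit form $\varepsilon_n^2 = (1-\varepsilon_n)^{n+1}$, I would refine the tail estimate by splitting the sum $\sum_{m=1}^n |\sigma_m|/|\lambda|^m$ into a ``head'' $m \le M$ and a ``tail'' $m > M$, applying Abel summation together with the aggregated bound $\sum|\sigma_j| \le m$ on the head while keeping the uniform bound on the tail, and then optimising $M$ so that both contributions balance precisely at the defining algebraic equation.

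The main obstacle is this final refinement. The uniform bound $|\sigma_m| \le 1$ on its own is too blunt and produces only the ``$|\lambda_k| \le 2$''-type estimate, so the whole gain promised by the Lemma hinges on extracting the finer aggregated information from $(\ref{sigma})$ and funnelling it through a carefully chosen partition of the coefficient sum, producing the exact algebraic equation $\varepsilon^2 = (1-\varepsilon)^{n+1}$ that defines $\varepsilon_n \in (0,1)$.
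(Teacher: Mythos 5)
First, a point of comparison: the paper does not actually prove this lemma at all — it is imported from \cite{Chu2010} (the paper only proves the subsequent estimates for $\varepsilon_n$ in (\ref{equal_eps}) and the sharpness example), so there is no in-paper argument to measure yours against. Judged on its own, your proposal establishes only the easy half. The reduction to $a=1$ and the induction via (\ref{sigma}) giving $|\sigma_m|\le 1$ are correct, and they coincide with the first step the paper attributes to \cite{Chu2010} (``$|s_m|\le a^m$ implies $|\sigma_m|\le a^m$''). You are also right that inserting $|\sigma_m|\le 1$ into $|P_n(\lambda)|/|\lambda|^n\ge 1-\sum_{m=1}^n|\sigma_m|\,|\lambda|^{-m}$ yields only $\max_k|\lambda_k|\le 2a$.

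The genuine gap is your final ``refinement''. The aggregated bound $\sum_{j=1}^{m}|\sigma_j|\le m$ is an immediate consequence of the pointwise bound $|\sigma_j|\le 1$ and carries no additional information: for the decreasing positive weights $x_m=|\lambda|^{-m}$, Abel summation against partial sums bounded by $m$ gives $\sum_{m\le M}|\sigma_m|x_m\le Mx_M+\sum_{m<M}m(x_m-x_{m+1})=\sum_{m\le M}x_m$, i.e.\ exactly the same estimate as termwise use of $|\sigma_m|\le 1$. Hence no choice of the split point $M$ can push the head-plus-tail bound below the full geometric sum, and the scheme is structurally incapable of beating the threshold $2a$, let alone of producing the equation $\varepsilon_n^2=(1-\varepsilon_n)^{n+1}$. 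Quantitatively, for $|\lambda|=(1+\varepsilon_n)a$ with $\varepsilon_n\sim 2\ln n/n$ one has $\sum_{m=1}^n(1+\varepsilon_n)^{-m}\sim n/(2\ln n)\gg 1$, so \emph{no} triangle-inequality argument resting solely on $|\sigma_m|\le 1$ (or anything equivalent to it in aggregate) can show $P_n\neq 0$ on that circle. The actual proof in \cite{Chu2010} must exploit strictly finer structure — cancellation among the $\sigma_m$, or a Tur\'an-type lower bound forcing some $|S_m|>a^m$ whenever a root has modulus $(1+\varepsilon)a$ with $\varepsilon^2>(1-\varepsilon)^{n+1}$ — and that mechanism is absent from your write-up; you flag it yourself as the ``main obstacle'', which is honest, but it means the lemma is not proved.
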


There exist several estimates for $\varepsilon_n$ in (\ref{equal_eps}). In particular, it was shown in  \cite{Chu2010} that $\varepsilon_n=o(n^{-\beta})$, $n\to\infty$, for any fixed $\beta \in (0,1)$. Later on, it was proved in~\cite{DanChu2011} that
$\varepsilon_n< n^{-1}\ln^2 n$, $n\ge 10$. Further estimates were announced (with some gaps in the proof though) in the manuscript \cite{Chu2013}.

Our purpose now is to obtain final estimates for $\varepsilon_n$. We start with the following lemma that contains the first part of Theorem~\ref{theorem1}.

\begin{lemma}
	\label{lemma3.3}
It holds for $\varepsilon_n$ in $(\ref{equal_eps})$ that
	\begin{equation}
	\label{eps_n_ineq}
	\varepsilon_n\le\frac{2(\ln n -\ln\ln n)}{n}< \frac{2\ln n}{n}, \quad n\ge 2, \qquad  \varepsilon_n\sim \frac{2\ln n}{n}, \quad n\to\infty.
	\end{equation}
\end{lemma}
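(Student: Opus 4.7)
My plan rests on the monotonicity of the auxiliary function $F(\varepsilon) := \varepsilon^2 - (1-\varepsilon)^{n+1}$ on $[0,1]$. Since $F'(\varepsilon) = 2\varepsilon + (n+1)(1-\varepsilon)^n > 0$ and $F(0) = -1 < 0 < 1 = F(1)$, equation (\ref{equal_eps}) has a unique root $\varepsilon_n\in (0,1)$. Consequently, to prove $\varepsilon_n\le \varepsilon_*$ it suffices to verify $F(\varepsilon_*)\ge 0$, i.e., $(1-\varepsilon_*)^{n+1}\le \varepsilon_*^2$, and the reverse direction delivers matching lower bounds.

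For the upper estimate I would take $\varepsilon_* := \frac{2(\ln n - \ln\ln n)}{n}$ (the case $n=2$, where $\varepsilon_* > 1$, being settled at once from $\varepsilon_2 < 1$). Using the standard inequality $(1-x)^{n+1}\le e^{-(n+1)x}$ on $[0,1]$ together with $(n+1)\varepsilon_*\ge n\varepsilon_*$, one immediately obtains
$$
(1-\varepsilon_*)^{n+1}\le e^{-n\varepsilon_*} = e^{-2(\ln n-\ln\ln n)} = \frac{(\ln n)^2}{n^2},
$$
while $\varepsilon_*^2 = \frac{4(\ln n-\ln\ln n)^2}{n^2}$. Hence $F(\varepsilon_*)\ge 0$ reduces to the elementary inequality $2(\ln n-\ln\ln n)\ge\ln n$, i.e., $n\ge(\ln n)^2$, which is easy to verify for all $n\ge 2$. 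The strict bound $\varepsilon_* < \frac{2\ln n}{n}$ then follows from $\ln\ln n > 0$ for $n\ge 3$, with $n=2$ handled separately.

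For the asymptotic $\varepsilon_n\sim \frac{2\ln n}{n}$ the matching lower bound comes from choosing $\varepsilon_-:=\frac{2(\ln n - c\ln\ln n)}{n}$ with any fixed $c>1$, and using the second-order inequality $\ln(1-x)\ge -x-x^2$ (valid for small $x$) to get
$$
(1-\varepsilon_-)^{n+1} \ge e^{-(n+1)(\varepsilon_- + \varepsilon_-^2)} = \frac{(\ln n)^{2c}}{n^2}(1+o(1)),
$$
since $(n+1)\varepsilon_-^2 \to 0$ and $(n+1)/n \to 1$. As $\varepsilon_-^2 = O((\ln n)^2/n^2)$ is of strictly smaller order when $c>1$, this yields $F(\varepsilon_-) < 0$ for $n\ge n_0$, so $\varepsilon_n > \varepsilon_-$. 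Combined with the upper estimate, this gives the claimed asymptotic.

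The main technical care is the bookkeeping of $(n+1)$ versus $n$ in the exponent and the control of the second-order term $\varepsilon_-^2$ in the expansion of $\ln(1-x)$; without the latter, the exponential bound $(1-x)^{n+1}\approx e^{-(n+1)x}$ would be too lossy to match the upper estimate. Small values of $n$ are handled by direct numerical verification.
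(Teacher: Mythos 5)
Your proof is correct, and the first half (the explicit bound for $n\ge 2$) is essentially the paper's argument: the same monotone auxiliary function $E(x)=x^2-(1-x)^{n+1}$, the same test point $\tfrac{2(\ln n-\ln\ln n)}{n}$, and the same chain $(1-x)^{n+1}\le(1-x)^n\le e^{-nx}$ reducing everything to $n\ge(\ln n)^2$; you are in fact slightly more careful than the paper in flagging that for $n=2$ the test point exceeds $1$ (and that the strict middle inequality of the displayed chain needs $\ln\ln n>0$, i.e. $n\ge 3$). Where you genuinely diverge is the asymptotic $\varepsilon_n\sim\tfrac{2\ln n}{n}$. The paper substitutes $\varepsilon_n=C_n\tfrac{\ln n}{n}$ into $\varepsilon_n^2=(1-\varepsilon_n)^{n+1}$, takes logarithms, and reads off $C_n=2-o(1)$ after dividing by $\ln n$; this is shorter but tacitly needs $\ln C_n=o(\ln n)$, i.e. an a priori lower bound on $C_n$ that the paper does not make explicit. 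You instead produce a matching lower test point $\varepsilon_-=\tfrac{2(\ln n-c\ln\ln n)}{n}$, $c>1$, and run the same sign comparison in the other direction, controlling the second-order term via $\ln(1-x)\ge -x-x^2$. This buys a fully self-contained two-sided squeeze at the cost of a little extra bookkeeping, and it even yields the sharper quantitative statement $\varepsilon_n=\tfrac{2\ln n}{n}\bigl(1+O(\tfrac{\ln\ln n}{\ln n})\bigr)$, consistent with the paper's own upper bound $\tfrac{2(\ln n-\ln\ln n)}{n}$. Both routes are sound.
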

\begin{proof}
Let us prove the inequality in (\ref{eps_n_ineq}) for $n\ge 2$. For this, consider the function
$$
E(x):=x^2-(1-x)^{n+1},\qquad x\in [0,1].
$$
Since $E'(x)=2x+(n+1)(1-x)^n>0$ for $x\in[0,1]$,
the function $E$ monotonically increases in the segment $[0,1]$. Moreover, $E$ has different signs at the ends of the segment. Consequently, in order to obtain the required estimate, it is sufficient to
prove the inequality
$$
E \left(\frac{2\ln (n/\ln n)}{n}\right)> 0.
$$
Take into account that $1-x\le e^{-x}$ and $(1-x)^{n+1}\le (1-x)^n$ for $x\in [0,1]$. Thus
$$
E\left(\frac{2\ln (n/\ln n)}{n}\right)\ge \left(\frac{2\ln (n/\ln n)}{n}\right)^2-e^{-n\cdot \frac{2\ln (n/\ln n)}{n}}=
\left(\frac{2\ln (n/\ln n)}{n}\right)^2-\left(\frac{\ln n}{n}\right)^2
$$
$$
=\frac{\left(\ln \left(\tfrac{n}{\ln n}\right)^2+\ln n\right)\cdot \left(\ln \left(\tfrac{n}{\ln n}\right)^2-\ln n\right)}{n^2}=
\frac{\ln \tfrac{n^3}{(\ln n)^2}\cdot \ln \tfrac{n}{(\ln n)^2}}{n^2}>0,\qquad n\ge 2.
$$

To prove $\varepsilon_n\sim \frac{2\ln n}{n}$, $n\to\infty$, we approximately solve the equation (\ref{equal_eps}) with respect to $\varepsilon_n$. Let $\varepsilon_n=C_n\frac{\ln n}{n}$. From the inequality in (\ref{eps_n_ineq}) that we just proved it follows that $0<C_n<2$ for $n\ge 2$. Substituting the expression for $\varepsilon_n$ into (\ref{equal_eps}) and taking the logarithm of the equality obtained leads to
$$
2\left(\ln C_n +\ln \ln n-\ln n \right)=(n+1)\ln \left(1-\frac{C_n \ln n}{n}\right).
$$
Therefore for $n\to \infty$,
$$
O(1) +2\ln \ln n-2\ln n =(n+1)\left(-\frac{C_n \ln n}{n}+o\left(\frac{\ln n}{n}\right)\right).
$$
Dividing both parts by $\ln n$ implies after several simplifications that $C_n=2-o(1)$ and 
$$
\varepsilon_n=\frac{2\ln n}{n} -o\left(\frac{\ln n}{n}\right), \qquad n\to\infty.
$$
Thus we are done.
\end{proof}

The second part of Theorem~\ref{theorem1} is covered by the following result that was first announced in the manuscript \cite{Chu2013}.
\begin{lemma}
For odd $n\ge n_0$ there exists $\Lambda_n$ such that $|S_m(\Lambda_n)|\le a^m$ and
\begin{equation}
\label{prim1.2}
\left(1+\frac{1}{10n}\right)a\le |\lambda_1|\le  \left(1+\frac{1}{n}\right)a.
\end{equation}
\end{lemma}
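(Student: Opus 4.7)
By homogeneity, rescale so that $a = 1$. The idea is to construct $\Lambda_n$ as a perturbation of the rotationally symmetric configuration $\lambda_k^{(0)} := \rho\,\omega^{k-1}$, with $\rho := 1 + c/n$, $\omega := e^{2\pi i/n}$, and $c \in (1/10,\,1)$ to be chosen. The symmetric set has $S_m(\Lambda_n^{(0)}) = 0$ for all $m = 1, \ldots, n-1$, while $S_n(\Lambda_n^{(0)}) = n\rho^n$ is of order $n$, drastically violating $|S_n|\le 1$.

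The strategy is therefore to perturb $\lambda_2, \ldots, \lambda_n$ (holding $\lambda_1 = \rho$ fixed) by increments $\epsilon_k$, aiming to cancel the excess in $S_n$ while keeping each $|S_m|$ below $1$ for $m < n$. A Fourier analysis on the symmetric base point is natural: writing
$$
\hat\epsilon_l := \sum_{k=2}^n \omega^{l(k-1)}\,\epsilon_k,
$$
the first-order change decouples over Fourier modes as $\Delta S_m = m\rho^{m-1}\hat\epsilon_{m-1}$. Prescribing $\hat\epsilon_{n-1} = -\rho$ (so that $\Delta^{(1)}S_n = -n\rho^n$ kills the excess exactly) together with the harmonic distribution $\hat\epsilon_l = \rho/((l+1)H_{n-1})$ for $l = 0, \ldots, n-2$, where $H_{n-1} = 1 + \tfrac12 + \ldots + \tfrac{1}{n-1}$, satisfies the compatibility $\sum_l \hat\epsilon_l = 0$ (equivalent to $\epsilon_1 = 0$) and gives $|\Delta^{(1)} S_m| = \rho^m / H_{n-1} \le e^c/H_{n-1}$, which is at most $1$ as soon as $n$ is large enough that $H_{n-1} \ge e$.

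The main obstacle is that the actual power sums are polynomials of degree up to $n$ in the $\epsilon_k$'s: a direct estimate shows that the quadratic contribution to $S_n$ is itself of order $n\rho^n$, comparable to the first-order cancellation engineered above. To complete the argument one must either iterate the linearisation to a genuine non-linear fixed point, or choose the $\hat\epsilon_l$ more subtly so that $S_n$ vanishes to all relevant orders while the intermediate $|S_m|$'s remain controlled; the parity hypothesis ``$n$ odd'' enters in aligning the sign $(-1)^{n-1}=+1$ in Newton's identity for $S_n$. The full technical execution, together with the resulting effective $n_0$, is the content of the manuscript \cite{Chu2013} referenced in the excerpt. Once in place, the two-sided bound $1 + 1/(10n) \le |\lambda_1| = \rho \le 1 + 1/n$ is immediate from $c \in [1/10,\,1]$.
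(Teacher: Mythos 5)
Your construction does not reach a proof: you correctly identify the fatal obstacle yourself (the quadratic contribution to $S_n$ under your perturbation is of order $n\rho^n$, the same size as the first-order cancellation you engineered) and then defer its resolution to an external manuscript. That deferral is the entire content of the lemma. The difficulty is structural, not technical: to cancel the excess $n\rho^n$ in $S_n$ you must take $\hat\epsilon_{n-1}=-\rho$, which forces the individual increments $\epsilon_k$ to be of order $1/n$ --- the same order as the spacing between adjacent points of the symmetric configuration $\rho\,\omega^{k-1}$. This is not a small perturbation of the root set in any sense that would let the linearisation dominate, so neither iterating to a fixed point nor a ``more subtle'' choice of the $\hat\epsilon_l$ is obviously available, and nothing in your write-up controls $|\lambda_1|$ of the perturbed configuration either (you fix $\lambda_1=\rho$ a priori, but the eventual nonlinear correction would have to move all the roots, including possibly the largest one).

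The paper avoids this entirely by working in the opposite direction: instead of prescribing roots and trying to control their power sums, it prescribes the power sums ($S_m=1$ for $m=1,\dots,n-1$ and $S_n=-1$, so $|S_m|\le 1$ holds by fiat) and recovers the roots from the Newton--Girard formulas, which for odd $n$ give the completely explicit polynomial $P_n(\lambda)=\lambda^{n-1}(\lambda-1)+2/n$. The upper bound $|\lambda_k|<1+1/n$ then follows from Rouch\'e's theorem against $p_n(\lambda)=\lambda^{n-1}(\lambda-1)$, and the lower bound $|\lambda_1|\ge 1+1/(10n)$ from an argument-increment (winding-number) analysis showing that one loop of the image of the circle $|\lambda|=1+1/(10n)$ under $P_n$ fails to enclose the origin. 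Since the Newton moment problem always has a unique solution (as the paper emphasises around its equation for $P_n$), nothing is lost by prescribing the $S_m$ directly; if you want to salvage your approach, that is the missing idea --- the existence half of the lemma is free, and all the work should go into locating one large root of an explicit polynomial.
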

\begin{proof}
By changing variables we come to the case $a=1$.
For $n\ge 2$ consider the polynomial (\ref{P_n}) with the roots $\Lambda_n$ whose power sums are defined by
$$
{S}_m=1, \quad m=1,\ldots,n-1, \qquad {S}_n=(-1)^n.
$$
It can be easily seen that for even $n$ (we do not consider this case below) one has $$
p_n(\lambda):=\lambda^{n}-\lambda^{n-1},
$$
whose roots lie in the disc $|\lambda|\le 1$. For odd $n$,
$$
S_m=1,\quad m=1,\ldots,n-1,\qquad S_n=-1,
$$
and, by (\ref{sigma}),
$$
\sigma_1=1,\quad \sigma_2=\ldots={\sigma}_{n-1}=0,\quad \sigma_n=-2/n.
$$
Consequently,
$$
{P}_n(\lambda):=\lambda^{n-1}(\lambda-1)+2/n.
$$
Let us show that one of the roots of this polynomial, say, ${\lambda}_1$, satisfies~(\ref{prim1.2}). Below we use the notation
$$
D(r):=\{\lambda:|\lambda|<r\}, \quad \gamma(r):=\partial D(r), \quad
l(r):=p_n(\gamma(r)), \quad L(r):=P_n(\gamma(r)).
$$
		
We first prove the right hand side inequality in (\ref{prim1.2}). By Rouch\'e's theorem, for $n\ge 5$ the polynomials ${P}_n$ and $p_n$ have the same number of roots in the disc $D\left(1+ 1/n\right)$. Indeed, for  $\lambda\in \gamma\left(1+1/n\right)$ we have
$$
|{P}_n(\lambda)-p_n(\lambda)|=\tfrac{2}{n}<\left(1+\tfrac{1}{n}\right)^{n-1}\tfrac{1}{n}\le |p_n(\lambda)|,\qquad n\ge 5.
$$
Consequently, for odd $n$ the roots of ${P}_n$ satisfy the estimate
$$
\max_{k=1,\ldots,n}|{\lambda}_k|< 1+\tfrac{1}{n}, \qquad n\ge 5.
$$

It is clear geometrically that the argument of the vector  $w_1=p_n(\lambda)$ is monotonically growing  while moving around the circle $\gamma(r)$ with $r>1$ in the positive direction. Moreover, the length of the vector $w_1$ is growing while $\lambda$ moves around the upper semicircle  $\gamma(r)\cap \mathbb{C}^+$ in the positive direction. The image $l(r)$ of the circle $\gamma(r)$ is symmetric with respect to the real axis and has $n$ self-intersection points, belonging to the axes. These points divide the curve $l(r)$ into $n$ connected components (loops), each containing the origin. Note also that the image of $L(r)$ is the curve $l(r)$ shifted  to the right by $2/n$. Consequently, the corresponding loops of  the image  $L\left(1+1/n\right)$ still contain the origin.
		
Now we are going to show that at least one of the loops of the image $L\left(1+1/(10n)\right)$ of the circle $\gamma\left(1+1/(10n)\right)$ does not contain the origin. This means that the argument increment of  the   vector $w_2={P}_n(\lambda)$ does not exceed  $2\pi(n-1)$ on the circle $\gamma\left(1+1/(10n)\right)$, and thus at least one of the roots of the polynomial  ${P}_n$ lie outside the circle, i.e. the left hand side estimate in (\ref{prim1.2}) is true. Consider the arc
		$$
		\gamma^*:=\left\{\lambda \in \gamma\left(1+\tfrac{1}{10n}\right): -\tfrac{17}{10n}\le\arg \lambda \le \tfrac{17}{10n}\right\}.
		$$
Let us find the argument increment over this arc for the continuous branch of the argument of $w_1$. It is equal to the sum of the argument increments for each factor in $p_n$, i.e.
		$$
		\Delta_{\gamma^*} \arg w_1=\Delta_{\gamma^*} \arg  \lambda^{n-1}+\Delta_{\gamma^*} \arg  (\lambda-1).
		$$
It can be easily seen that  $\Delta_{\gamma^*} \arg  \lambda^{n-1}=\frac{17}{5}(1-1/n)>1.08\pi (1-1/n)$. Moreover, the increment $\Delta_{\gamma^*} \arg  (\lambda-1)\ge 2\,{\rm arctan}\, 16>0.96\pi$ for sufficiently large $n$. This follows from the fact that,  for $\lambda\in \gamma^*$ and $\varphi=\pm \frac{17}{10n}$,  we have  $0<{\rm Re\,}(\lambda-1)\le 1/(10n)$ and $|{\rm Im\,}(\lambda-1)|\le 18/(10n)$, where ${\rm Im\,}(\lambda-1)$ has the same sign as $\varphi$. Thus the total increment  of the argument of $\Delta_{\gamma^*} \arg w_1>2\pi$ for sufficiently large  $n$. This implies that the image $p_n(\gamma^*)$ includes a loop that contains the origin inside.

Now let us prove that that the analogous loop of the image $P_n(\gamma^*)$ already does not contain the origin inside. To do so, let us note that the image $p_n(\gamma^*)$ entirely lies in the disc $|\lambda|<2/n$. Indeed, for $\lambda\in \gamma^*$ and sufficiently large $n$ we have
		\begin{align*}
		&|\lambda|^{n-1}\le \left(1+\tfrac{1}{10n}\right)^{n-1}< \sqrt[10]{e},\\
		&|\lambda-1|\le \sqrt{\left(1+\tfrac{1}{10n}\right)^2-2\left(1+\tfrac{1}{10n}\right)\cos \tfrac{17}{10n} +1}\\
		&\qquad\quad \le \sqrt{\tfrac{1}{(10n)^2}+\left(\tfrac{17}{10n}\right)^2\left(1+\tfrac{1}{10n}\right)}<
		\tfrac{18}{10n}.
		\end{align*}
Consequently,  $|p_n(\gamma^*)|<18\sqrt[10]{e}/(10n)<2/n$ for sufficiently large  $n$ and therefore the image $P_n(\gamma^*)$ entirely lies in the disc $|\lambda-2/n|<2/n$ that does not contain the origin.
		
Summarising, for odd $n\ge n_0$ the power sums of the roots of $P_n$ satisfy the inequalities $|{S}_m|\le 1$ for $m=1,\ldots,n$, and one of the roots meets the estimate (\ref{prim1.2}). 
\end{proof}

\end{document}